\newenvironment{proof}{\hbox{}\vspace{-0.5cm} {\bf Proof:}}{\hfill $\Box$ \\}
\newtheorem{theorem}{Theorem}
\newtheorem{lemma}{Lemma}
\newtheorem{corollary}{Corollary}
\newtheorem{definition}{Definition}
\newtheorem{remark}{Remark}
\newcommand{\R}{\mathbb{R}} 
\newcommand{\N}{\mathbb{N}} 
\newcommand{\Z}{\mathbb{Z}} 
\newcommand{\sol}{y} 
\newcommand{\hilbert}{\mathscr{H}} 
\newcommand{\solset}{\mathscr{Y}} 
\newcommand{\test}{\mathscr{C}} 
\newcommand{\testcyl}{\mathrm{Cyl}}
\newcommand{\meas}{\mathscr{M}} 
\newcommand{\scal}[2]{\langle{#1},{#2}\rangle} 
\newcommand{\didier}[1]{{\color{black} #1}} 
\title{\bf Optimizing quasi-dissipative evolution equations with the moment-SOS hierarchy\footnote{D. Henrion's work was co-funded by the European Union
under the ROBOPROX project (reg. no. CZ.02.01.01/00/22 008/0004590). S. Marx's work was co-funded by the Agence National de la Recherche (ANR) under the ROTATION project (reg. no. ANR-24-CE48-0759). N. Seguin's work was co-funded by the Agence National de la Recherche (ANR) under the ANR HEAD project ANR-24-CE40-3260.}}
\begin{document}

\author{Saroj Prasad Chhatoi$^1$, Didier Henrion$^{1,2}$, Swann Marx$^3$, Nicolas Seguin$^4$}

\footnotetext[1]{CNRS; LAAS; Universit\'e de Toulouse, 7 avenue du colonel Roche, F-31400 Toulouse, France.}
\footnotetext[2]{Faculty of Electrical Engineering, Czech Technical University in Prague,
Technick\'a 2, CZ-16626 Prague, Czechia.}
\footnotetext[3]{Nantes Universit\'e, Ecole Centrale Nantes,
CNRS, LS2N, UMR 6004, F-44000 Nantes, France.}
\footnotetext[4]{IMAG, Inria, Universit\'e de Montpellier, CNRS, Montpellier, France.}

\date{Draft of \today}

\maketitle

\begin{abstract}
We prove that there is no relaxation gap between a quasi-dissipative nonlinear evolution equation in a Hilbert space and its linear Liouville equation reformulation {on probability measures}. In other words, strong and generalized solutions of such equations are unique in the class of measure-valued solutions. As a major consequence, non-convex numerical optimization over these non-linear partial differential equations can be carried out with the {infinite-dimensional} moment-SOS hierarchy with {global} convergence guarantees. This covers in particular all reaction-diffusion equations with polynomial nonlinearity.
\end{abstract}

\section{Introduction}

The {\it moment-SOS\footnote{SOS: sum of squares.} hierarchy} is a mathematical technology that allows to solve numerically with global optimality guarantees a large class of non-convex optimization\footnote{Optimization should here be interpreted broadly: finite- and infinite-dimensional, continuous and discrete versions of calculus of variations, optimal control and optimal transport problems on ordinary, stochastic and partial differential equations.} problems at the price of solving a family of convex relaxations (typically semidefinite optimization problems) of increasing size. The approach and its applications are described in \cite{hkl20}.

The first step in the approach consists of reformulating a non-convex non-linear optimization problem in a given domain as a linear problem in cones of measures supported on this domain. An archetypal example  is the Kantorovich linear reformulation of the non-linear Monge problem of optimal transport \cite{s15}. An important question is whether there is a {\it relaxation gap}, i.e. whereas the value of the linear problem on measures differ from the value of the original non-linear problem. In the absence of a relaxation gap, it can be shown (under standard assumptions) that the moment-SOS hierarchy converges, i.e. it generates a sequence of bounds which converges monotonically to the value of the original problems. Then a globally optimal solution can be approximated from the solutions of the successive convex relaxations, see e.g. \cite{mn24} for optimal transport.

For non-linear partial differential equations
(PDEs) and their control, an early attempt to use the linear measure formulation was reported in \cite{r93} for semi-linear elliptic equations. More recent attempts using the moment-SOS approach can be found in \cite{bc06,k15,mp20} for linear PDEs, and more recently in \cite{gf19,ft22} for non-linear PDEs. A fully general non-linear setup including optimization, calculus of variations and PDEs was described in \cite{khl22}. As far as we know, in these references, there is neither convergence guarantee of the proposed hierarchy, nor a proof of no relaxation gap.

In the context of scalar hyperbolic conservation laws, a particular class of nonlinear PDEs, no relaxation gap was ensured by introducing entropy inequalities \cite{mwhl20} (see also \cite{cmns24} for a similar result for conservation laws admitting parameters). 
In \cite{kr22} it was shown that there is no relaxation gap for scalar problems (i.e. when then  dimension of the domain or the dimension of the codomain is equal to one),
both for calculus of variations and for optimal control problems. Conversely, an example of a variational problem with relaxation gap is provided when the dimensions of the domain and of the codomain are greater than one.
It is also shown that in the presence of integral constraints, a relaxation gap may occur at any dimension of the domain and of the codomain. More recently, it was shown \cite{hkkr24} that under convexity assumptions, there is no relaxation gap for a broad class of variational and optimal control problems on non-linear PDEs. 
The question of the absence of a relaxation gap for specific classes of non-linear PDEs remains however widely open.

These results were obtained for measures supported on finite-dimensional subsets of the time, space, function and function gradient domains. There is however the possibility of reformulating optimization problems over nonlinear PDEs as linear problems on measures supported on infinite-dimensional functional spaces. This is the point of view adopted e.g. in \cite[Part III]{f99} for optimization with relaxed controls. Measures on infinite-dimensional spaces and statistical measure-valued solutions are also prominently appearing in \cite{fmrt01}
and in the subsequent work \cite{rt22} on Navier-Stokes equations. This notion of statistical solutions allows for sound numerical implementations, see e.g. \cite{flmw20}. These solutions seem to be related with the measure-valued solutions studied in \cite{hikv23} and approximated numerically in the context of the moment-SOS hierarchy. This motivated us to focus on reformulations with measures on infinite-dimensional spaces and to address the question of relaxation gap in this infinite-dimensional setup.

In this paper, we show that there is no relaxation gap for a broad class of non-linear PDEs defined by evolution equations on infinite-dimensional functional spaces, with an operator satisfying a quasi-dissipativity condition also known as {quasi-monotonicity} or $m$-accretive property, explained in details in \cite{crandall1971generation,showalter2013monotone,ik02}. The key ingredient in the proof is differential calculus with probability measures \cite{ags08}. No relaxation gap for this class of PDEs implies that we can use the infinite-dimensional moment-SOS hierarchy to solve optimization and optimal control problems on these equations, with convergence guarantees. It is of interest to mention \cite{al18} which also provides an alternative proof in the case of Hamiltonian PDEs, but without providing a numerical scheme nor making a clear link with the semigroup literature. To the best of our knowledge, this link is disclosed in our paper for the first time. 

{Existing numerical methods for non-linear PDEs focus on specific classes of PDEs. Except conservation laws for which sophisticated methods such as the wave front tracking is available  \cite{godlewski2013numerical}, these methods focus on semi-linear equations and provide a numerical scheme for the linear part which is proved to work with the nonlinear terms of the equation by using some fixed-point arguments. Classical numerical schemes for PDEs are based on space(-time) discretizations include the finite-difference method \cite{cfl67}, the finite-element method \cite{brenner2008mathematical}, the finite-volume method \cite{EGH} or discontinuous Galerkin methods \cite{c98}. We refer the interested reader to \cite{tadmor2012review} for a general overview of numerical schemes for nonlinear PDEs. Existing methods being based on fixed-point arguments and discretization (including implicit schemes), only local convergence can be guaranteed. Furthermore, when facing optimization problems involving PDEs, only local minima can be obtained with such methods.
	{See e.g. \cite{p22} for a simple example of a non-convex PDE optimal control problem with several local (and global) optima.}
	
	In contrast with all these methods, the moment-SOS hierarchy is designed to approximately compute {\it globally optimal solutions}. The main difference with finite-difference, finite-element and finite-volume methods consists in {discretizing measures via finitely many of their moments}, instead of discretizing the time and the space. Therefore, one may interpret this as a Galerkin method. However, the difference relies on the equation which is solved: indeed, while the Galerkin method focuses on solving (possibly non-linear) equations with functions as unknowns, the moment-SOS hierarchy is used in the context of (always linear) equations with measures as unknowns.
	
	More broadly speaking, the moment-SOS hierarchy has proved to be applicable in many other areas of dynamical systems such as optimal control \cite{lhpt08} or the computation of region of attraction \cite{hk14} for ordinary differential equations, and we hope that the results presented in this paper can pave the way for similar achievements for non-linear PDEs, i.e. optimal control (including boundary control) and region of attraction for nonlinear PDEs, as well as inverse problems.}

{The outline of the paper is as follows. In Section \ref{sec:quasimonotone} we introduce the non-linear PDEs under study, in the form of differential equations with quasi-dissipative operators evolving in a Hilbert space. A few examples are described in Section \ref{sec_examples}. In Section \ref{sec:linear} we introduce the Liouville equation, a linear measure reformulation of the non-linear PDE, as well as its relationship with various notions of solutions. Section \ref{sec:nogap} contains our two main results: first we state in Theorem \ref{nogap} that there exists a unique solution to the Liouville as soon as the generator is supposed to be quasi-dissipative; and second, as a consequence of the first result, we show a no relaxation gap result which states that for Dirac initial data, the unique solution to the Liouville equation is the Dirac at the solution of the non-linear PDE.
	In Section \ref{sec:hierarchy} we briefly describe the infinite-dimensional moment-SOS hierarchy. It is then illustrated numerically in Section \ref{sec:examples}. Further research directions and extensions are described in the concluding Section \ref{sec:conclusion}.}

\section{Quasi-dissipative evolution equations}\label{sec:quasimonotone}

Consider the evolution equation
\begin{equation}\label{ode}
\dot{\sol}(t) = f(\sol(t)), \quad \sol(0)=\sol_0, \quad {t \in [0,1]}
\end{equation}
with the time-dependent function $\sol : [0,{1}] \to \hilbert$, where $\hilbert$ is a given real Hilbert space equipped with a norm $|\cdot|$ and a scalar product $\langle \cdot,\cdot\rangle$, the dot denotes the time derivative, $f : D(f)\subset \hilbert \to \hilbert$ is a given nonlinear operator, with domain $D(f)$ densely defined in $\hilbert$, and $\sol_0 \in D(f)$ is a given initial condition. We further assume that $\hilbert$ forms a {rigged Hilbert space} (or a Gelfand triple) $\hilbert_1\subset \hilbert \subset \hilbert_{-1}$, where $\hilbert_1$ is supposed to be equal to $D(f)$, which is equipped with a norm $| z|$ for all $z\in \hilbert_1$, and where $\hilbert_{-1}$ is {defined as the topological dual to} $\hilbert_1$. A typical example is $\hilbert:=L^2(\mathbb{R})$ (square integrable functions), $\hilbert_1:=H^1(\mathbb{R})$ (functions with square integrable weak derivatives) and $\hilbert_{-1}:=H^{-1}(\mathbb{R})$ (dual space including distributions). In the case where $f$ is semilinear, i.e., $f(\sol):=A\sol + g(\sol)$, with  $A:D(A)\subset \hilbert \rightarrow \hilbert$ the generator of a strongly continuous semigroup, and $g$ a bounded operator from $\hilbert$ to $\hilbert$, the space $\hilbert_1$ is defined as $\hilbert_1:=D(A)$ (equal to $D(f)$ since $g$ is bounded) and $\hilbert_{-1}$ can be built \cite{tw09} as the completion of $\hilbert$ with respect to the norm $|(A-\rho I)^{-1} y|$ for all $y\in \hilbert$, where $\rho$ is an element of the resolvent of $A$.  We suppose furthermore that these Hilbert spaces are separable. {A solution of \eqref{ode} is denoted $\sol(t)$ or $\sol(t|\sol_0)$ if we want to emphasize the dependence on initial condition.} Finally, let $\solset$ denote a subset of $\hilbert$.

Before considering further properties of the operator $f$, we explain now what we mean by a solution of evolution equation \eqref{ode}. Indeed, there are several {non-equivalent} notions of solutions for infinite-dimensional systems. The first one, perhaps the most straightforward, is referred as strong solution \cite[Definition 2.2]{crandall1971generation}.

\begin{definition}[Strong solution]\label{defstrong}
	Let $\sol_0\in D(f)$. A function $\sol \in {\test([0,{1}],\hilbert)}$ is a strong solution to \eqref{ode} if it is Lipschitz continuous on compact subsets of $[0,{1}]$, differentiable almost everywhere, and equation \eqref{ode} is satisfied for almost every $t \in [0,{1}]$. 
\end{definition}

When facing lower regularity (i.e., with $\sol_0\in \hilbert$), a more suitable notion of solution should be introduced \cite[Definition, page 183]{showalter2013monotone}.

\begin{definition}[Generalized solution]\label{defgeneralized}
	Given $\sol_0\in \hilbert$, a generalized solution to \eqref{ode} is a function $y\in \test([0,{1}], \hilbert)$ for which there exists a sequence of initial conditions $(\sol_0^n)_{n\in\mathbb N}\in \hilbert_1$ converging to $\sol_0$ and strong solutions $(\sol^n)_{n\in \mathbb{N}}$ to \eqref{ode} converging to $\sol$, in the topology of $\hilbert$ and  $\test([0,{1}], \hilbert)$ respectively. 
\end{definition}

Another notion of interest, {particularly relevant for our purposes}, is the positive invariance of a set of solutions.

\begin{definition}[Positive invariance]\label{defsolset}
	Set $\solset\subset\hilbert$ is called positively invariant if $\sol_0 \in \solset$ implies $\sol(t) \in \solset$ for all $t \in [0,{1}]$.
\end{definition}

In this paper, we focus on evolution equations with operators satisfying a specific positivity condition.

\begin{definition}[Quasi-dissipative]\label{defqm}
	Operator $f$ is quasi-dissipative  on $\solset$ if there exists a constant $a\geq 0$ such that
	\begin{equation}\label{monotone}
	\scal{\sol_1-\sol_2}{f(\sol_1)-f(\sol_2)} \leq  a |\sol_1-\sol_2|^2
	\end{equation}
	for all $\sol_1,\sol_2 \in \solset$. 
\end{definition}

\begin{definition}[Maximality]
	Operator $f$ is maximal on $\solset$ if
	\begin{equation}\label{maximal}
	\forall h \in \solset, \: \exists \sol \in D(f), \: \sol=f(\sol)+h.
	\end{equation}
\end{definition}

{In the literature, quasi-dissipative operators are also called (up to a change of sign) quasi-monotone or $m$-accretive, see \cite{crandall1971generation,showalter2013monotone,ik02}.} 
If operator $f$ in \eqref{ode} is quasi-dissipative and maximal on $\hilbert$, it follows from the Crandall-Liggett theorem \cite[Theorem 5.6]{ik02} or \cite[Theorem 4.1 and Theorem 4.1a]{showalter2013monotone} that for any $\sol_0\in \hilbert$, resp. $\sol_0\in D(f)$, there exists a unique generalized solution $\sol\in \test([0,{1}],\hilbert)$, resp. unique strong solution $\sol\in \test([0,{1}],\hilbert)$ to \eqref{ode}. Furthermore, the operator $f$ generates a (nonlinear) strongly continuous semigroup denoted by $(\mathbb{S}(t))_{t\geq 0}$, meaning that the solution of \eqref{ode} can be written as $$\sol(t):=\mathbb{S}(t)\sol_0$$ for all $t\in [0,1]$, for all $\sol_0\in\hilbert$ for generalized solutions, and for all $\sol_0\in D(f)$ for strong solutions. In particular, if $f$ is quasi-dissipative and maximal, solutions exploding in finite-time {cannot exist}, meaning that the solution is always bounded on a finite interval of time. Therefore, there always exists a bounded solution set $\solset$ which is positively invariant.

\section{Examples}
\label{sec_examples}

\subsection{Semilinear equations}\label{secsemilinear}

Let
\begin{equation}\label{semilinear}
f(\sol) := A\sol + g(\sol)
\end{equation}
with $g:\hilbert\rightarrow \hilbert$ a bounded and quasi-dissipative operator and $A:D(A)\subset \hilbert$ a linear operator generating a strongly continuous semigroup denoted by $(\mathbb{T}(t))_{t\geq 0}$. A function $\sol\in \test([0,1],\hilbert)$ is called a mild solution {to evolution equation \eqref{ode} with semilinear operator \eqref{semilinear} if it satisfies the Duhamel formula}
\begin{equation}
\label{duhamel}
\sol(t)= \mathbb T(t) \sol_0 + \int_0^t \mathbb T(t-s) g(\sol(s))ds,\: \sol_0\in \hilbert.
\end{equation}
In the semilinear case, generalized solutions and mild solutions coincide, as proved in \cite[Lemma 4.3.4]{vanspranghe2022}. This notion of mild solution can be defined for semilinear operators as in \eqref{semilinear}, but it is difficult to apply it for quasi-linear dynamical systems (such as conservation laws) since operator $f$ does not admit a linear part, which might therefore prevent the use of the Duhamel formula.

Moreover, solving the implicit formulation \eqref{duhamel}, when it is possible, gives the semigroup $(\mathbb{S}(t))_{t\geq 0}$, generated by the operator $f$. In other words, the solution of \eqref{duhamel} can be written as $\sol(t):=\mathbb{S}(t)\sol_0$ for a.e. $t\geq 0$.

\subsection{The heat equation}

Let $\hilbert:=L^2(\Omega)$ with $\Omega$ a bounded open set of $\R^n$ of class $C^{\infty}$. Let
\begin{equation}\label{heatop}
f(\sol):=\Delta\sol
\end{equation}
where $\Delta := \sum_{i=1}^n \partial_{x_i x_i}$ is the Laplacian operator.
The domain is
\[
D(f):=H^2(\Omega)\cap H^1_0(\Omega).
\]

\begin{lemma}\label{lapqm}
	The heat operator \eqref{heatop} is quasi-dissipative on $\hilbert$.
\end{lemma}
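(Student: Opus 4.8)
The plan is to verify the quasi-dissipativity inequality \eqref{monotone} directly for the Laplacian operator, which in fact holds with the constant $a=0$ (so the heat operator is not merely quasi-dissipative but genuinely dissipative). First I would take two arbitrary elements $\sol_1,\sol_2 \in D(f) = H^2(\Omega)\cap H^1_0(\Omega)$ and compute the scalar product $\scal{\sol_1-\sol_2}{\Delta \sol_1 - \Delta \sol_2}$ in $\hilbert = L^2(\Omega)$, that is, $\int_\Omega (\sol_1-\sol_2)\,\Delta(\sol_1-\sol_2)\,dx$. Setting $w := \sol_1 - \sol_2$, the task reduces to showing $\int_\Omega w\,\Delta w\,dx \leq 0$.

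The key step is integration by parts (Green's identity). Applying it to $\int_\Omega w\,\Delta w\,dx$ yields a boundary term $\int_{\partial\Omega} w\,\partial_\nu w\,d\sigma$ and the bulk term $-\int_\Omega |\nabla w|^2\,dx$. Because both $\sol_1$ and $\sol_2$ belong to $H^1_0(\Omega)$, their difference $w$ has zero trace on $\partial\Omega$, so the boundary integral vanishes. This leaves $\scal{w}{\Delta w} = -\int_\Omega |\nabla w|^2\,dx \leq 0$, which is exactly \eqref{monotone} with $a = 0$. Hence the heat operator is quasi-dissipative on all of $\hilbert$ (indeed on $D(f)$, which is what the definition requires).

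The main technical point to justify carefully is the validity of Green's identity at the stated regularity: the integration by parts is legitimate precisely because $w \in H^2(\Omega)$, ensuring $\Delta w \in L^2(\Omega)$ and $\nabla w \in H^1(\Omega)^n$ with a well-defined normal trace, while $w \in H^1_0(\Omega)$ guarantees the vanishing boundary contribution. The smoothness assumption that $\Omega$ is a bounded $C^\infty$ domain guarantees that the trace operator and the divergence theorem apply in the Sobolev setting without difficulty. I do not expect any genuine obstacle here; the only care needed is to invoke the Sobolev-space version of Green's formula rather than the classical one, since $w$ need not be $C^2$ up to the boundary.
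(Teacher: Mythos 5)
Your proof is correct and follows essentially the same route as the paper: by linearity of $\Delta$ the condition \eqref{monotone} reduces to $\scal{w}{\Delta w}\leq 0$ for $w\in D(f)$, which follows from integration by parts with the boundary term killed by the $H^1_0(\Omega)$ condition, giving $a=0$. The only difference is that you spell out the vanishing of the boundary term and the Sobolev-space justification of Green's formula, which the paper leaves implicit.
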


{\bf Proof:}
Since $f$ in  \eqref{heatop} is linear, the quasi-dissipative condition \eqref{monotone} on $\hilbert$ becomes
\[
\scal{\sol}{f(\sol)} \leq a |\sol|^2
\]
for some $a\geq 0$ and for all $\sol \in \hilbert$. It is satisfied for $a=0$ since
\[
\scal{\sol}{f(\sol)} = \int_{\Omega} (\Delta\sol)\sol = -\int_{\Omega}|\partial_x \sol|^2 \leq 0
\]
with $\partial_x := (\partial_{x_i})_{i=1,\ldots,n}$ denoting the gradient operator.
$\Box$

\begin{lemma}\label{lapmax}
	The heat operator \eqref{heatop} is maximal on $\hilbert$.
\end{lemma}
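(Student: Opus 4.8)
The plan is to read the maximality condition \eqref{maximal} for the heat operator as the solvability of a resolvent-type elliptic boundary value problem. Since here $\solset=\hilbert=L^2(\Omega)$ and $f(\sol)=\Delta\sol$ with $D(f)=H^2(\Omega)\cap H^1_0(\Omega)$, unwinding \eqref{maximal} amounts to showing that for every $h\in L^2(\Omega)$ there exists $\sol\in H^2(\Omega)\cap H^1_0(\Omega)$ with $\sol=\Delta\sol+h$, that is,
\[
-\Delta \sol + \sol = h \;\text{ in } \Omega, \qquad \sol = 0 \;\text{ on } \partial\Omega.
\]
This is the classical equation $(I-\Delta)\sol=h$ with homogeneous Dirichlet boundary conditions, and the whole proof consists in producing its unique solution in the correct regularity class.

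First I would pass to the weak formulation: find $\sol\in H^1_0(\Omega)$ such that $\int_\Omega \nabla\sol\cdot\nabla v + \int_\Omega \sol\, v = \int_\Omega h\, v$ for all $v\in H^1_0(\Omega)$. The bilinear form $B(\sol,v):=\int_\Omega \nabla\sol\cdot\nabla v + \int_\Omega \sol\, v$ is exactly the $H^1_0$ inner product, hence bounded, and it is coercive since $B(\sol,\sol)=\|\sol\|_{H^1_0}^2$; the right-hand side $v\mapsto\int_\Omega h\, v$ is a bounded linear functional on $H^1_0(\Omega)$ because $h\in L^2(\Omega)$. The Lax--Milgram theorem then delivers a unique weak solution $\sol\in H^1_0(\Omega)$. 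The coercivity is immediate thanks to the favourable sign of the zeroth-order term $+\sol$, so no Poincaré inequality is required here.

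The decisive step is elliptic regularity: because $\Omega$ is a bounded open set with $C^\infty$ boundary and the data satisfy $h\in L^2(\Omega)$, the weak solution is in fact in $H^2(\Omega)$, hence $\sol\in H^2(\Omega)\cap H^1_0(\Omega)=D(f)$, and the identity $-\Delta\sol+\sol=h$ then holds in $L^2(\Omega)$. This upgrades the weak equality to $\sol=\Delta\sol+h=f(\sol)+h$ in $\hilbert$, which is precisely \eqref{maximal}.

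I expect the main obstacle to be exactly this boost of regularity from the form domain $H^1_0(\Omega)$ up to the operator domain $H^2(\Omega)\cap H^1_0(\Omega)$: it is what guarantees that the abstractly constructed solution actually lies in $D(f)$ rather than merely solving the equation in a distributional sense. This step genuinely uses the smoothness assumption on $\partial\Omega$ stated in the setup, whereas the existence part via Lax--Milgram is routine.
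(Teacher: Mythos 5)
Your proof is correct and coincides with the paper's: the paper simply cites \cite[Thm. 9.25]{b11}, and your argument (Lax--Milgram for the weak solution of $-\Delta \sol + \sol = h$ in $H^1_0(\Omega)$, followed by elliptic regularity on the smooth bounded domain to land in $H^2(\Omega)\cap H^1_0(\Omega) = D(f)$) is exactly the content of that cited result, spelled out.
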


{\bf Proof:} See \cite[Thm. 9.25]{b11}. $\Box$

\subsection{Reaction-diffusion}

Let $\hilbert:=L^2(0,1)$ and consider the following periodic reaction-diffusion operator
\begin{equation}\label{prdop}
f(\sol) := \partial_{xx} \sol + g(\sol)
\end{equation}
with domain 
\[
D(f) := \lbrace \sol \in H^2(0,1) : \sol[0] = \sol[1], \partial_x \sol[0] = \partial_x \sol[1] \rbrace
\]

and Fr\'echet differentiable operator $g$ from $\hilbert$ to $\hilbert$ (the operator $g$ does not have to be polynomial) such that $g(\sol)\didier{[x]} \leq 0$ if $\sol\didier{[x]} \leq \sol_{\min}$ or $\sol\didier{[x]} \geq \sol_{\max}$ for given bounds $\sol_{\min} \leq \sol_{\max}$.
\didier{For $y \in H^2(0,1)$, the notation $y[x]$ refers to the evaluation of function $y$ at the point $x \in [0,1]$.}


\begin{lemma}\label{prdsolset}
	The set $\solset := \{y \in D(f) : y_{\min} \leq y\didier{[x]} \leq y_{\max}, \:\forall x\in(0,1) \}$ is positively invariant for evolution equation \eqref{ode} with operator \eqref{prdop}. 
\end{lemma}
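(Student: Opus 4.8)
The plan is to run an energy estimate on the truncated positive and negative parts of the solution---a Stampacchia-type truncation argument---in which the periodic boundary conditions make the diffusion contribution favorable and the sign hypothesis on $g$ makes the reaction contribution favorable. First I would fix $\sol_0\in\solset$ and let $\sol(t)$ be the corresponding strong solution of \eqref{ode} with operator \eqref{prdop}; such a solution exists, stays in $D(f)$ and is Lipschitz in time with values in $\hilbert$ and differentiable for a.e.\ $t$, by the existence theory recalled at the end of Section~\ref{sec:quasimonotone}. To control the upper threshold, consider the functional $\phi(t):=\tfrac12\int_0^1\big((\sol(t)[x]-y_{\max})_+\big)^2\,dx$, where $(s)_+:=\max(s,0)$. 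Since $\sol_0[x]\le y_{\max}$ we have $\phi(0)=0$, while $\phi\ge 0$ always; the goal is to prove $\phi\equiv 0$.

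The key computation is $\phi'$. The map $v\mapsto\tfrac12\int_0^1\big((v[x]-y_{\max})_+\big)^2\,dx$ is continuously Fr\'echet differentiable on $\hilbert$ with gradient $v\mapsto(v-y_{\max})_+$ (the integrand $\tfrac12((\cdot)-y_{\max})_+^2$ being $C^1$ with Lipschitz derivative), so composing with the a.e.-differentiable trajectory yields $\phi'(t)=\scal{(\sol(t)-y_{\max})_+}{\dot\sol(t)}$ for a.e.\ $t$. Inserting $\dot\sol=\partial_{xx}\sol+g(\sol)$ splits $\phi'$ into a diffusion and a reaction term. For the diffusion term I integrate by parts: the boundary contribution $\big[(\sol-y_{\max})_+\,\partial_x\sol\big]_0^1$ vanishes because the periodic conditions $\sol[0]=\sol[1]$, $\partial_x\sol[0]=\partial_x\sol[1]$ make the integrand coincide at the two endpoints, leaving $-\int_0^1\mathbf{1}_{\{\sol>y_{\max}\}}(\partial_x\sol)^2\,dx\le 0$. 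For the reaction term, $(\sol-y_{\max})_+$ is supported where $\sol[x]>y_{\max}$, so the hypothesis on $g$ gives $g(\sol)[x]\le 0$ there and hence $\int_0^1(\sol-y_{\max})_+\,g(\sol)\,dx\le 0$. Thus $\phi'\le 0$ a.e., which together with $\phi(0)=0$ and $\phi\ge 0$ forces $\phi\equiv 0$, i.e.\ $\sol(t)[x]\le y_{\max}$ for all $t$ and a.e.\ $x$. The lower bound $\sol(t)[x]\ge y_{\min}$ follows from the symmetric estimate for $\psi(t):=\tfrac12\int_0^1\big((y_{\min}-\sol(t)[x])_+\big)^2\,dx$, in which the sign hypothesis on $g$ at the lower threshold $y_{\min}$ again renders the reaction term nonpositive after integration by parts.

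I expect the only real difficulty to be the rigorous justification of these two calculus steps under the limited regularity of a strong solution, rather than the sign bookkeeping itself: namely that $t\mapsto\phi(t)$ is absolutely continuous with the stated derivative---which follows from the $C^1$ character of the composite functional on $\hilbert$ combined with the a.e.\ Bochner differentiability of $\sol$---and that the integration by parts is licit for $\sol(t)\in H^2(0,1)$ with the boundary terms genuinely cancelling under the periodic conditions. Once these are in place, the argument uses nothing about $g$ beyond its sign at the two thresholds $y_{\min}$ and $y_{\max}$, and Definition~\ref{defsolset} gives exactly the claimed positive invariance of $\solset$.
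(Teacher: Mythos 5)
Your proof is correct and takes essentially the same route as the paper's: a Stampacchia truncation argument in which integration by parts plus periodicity cancels the boundary term, the truncation makes the diffusion contribution nonpositive, the sign hypothesis on $g$ handles the reaction term, and one concludes from $V(0)=0$, $V\geq 0$, $\dot V\leq 0$ (the paper likewise treats the lower threshold by the symmetric functional without further detail). The only difference is that you fix the truncation to be the quadratic positive-part functional $H(s)=\tfrac{1}{2}(s_+)^2$, which obliges you to invoke the Sobolev chain rule $\partial_x u_+=\mathbf{1}_{\{u>0\}}\partial_x u$, whereas the paper keeps a general smooth $G$ with bounded positive derivative, for which the ordinary chain rule suffices --- an immaterial variation.
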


\begin{lemma}\label{prdqm}
	The reaction-diffusion operator \eqref{prdop} is quasi-dissipative on the positively invariant set $\solset$ of Lemma \ref{prdsolset}.
\end{lemma}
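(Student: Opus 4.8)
The plan is to split the operator into its diffusion and reaction parts, $f(\sol)=\partial_{xx}\sol+g(\sol)$, and to estimate the bilinear form in \eqref{monotone} term by term. Fix $\sol_1,\sol_2\in\solset$ and write $w:=\sol_1-\sol_2$.

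First I would treat the diffusion part. By linearity $\partial_{xx}\sol_1-\partial_{xx}\sol_2=\partial_{xx}w$, and integration by parts on $(0,1)$ yields
\begin{equation*}
\scal{w}{\partial_{xx}w}=\bigl[w\,\partial_x w\bigr]_0^1-\int_0^1|\partial_x w|^2.
\end{equation*}
Since both $\sol_1$ and $\sol_2$ satisfy the periodic boundary conditions built into $D(f)$, the functions $w$ and $\partial_x w$ take equal values at $x=0$ and $x=1$, so the boundary term vanishes and the diffusion contribution equals $-\int_0^1|\partial_x w|^2\le 0$. This term only helps and can be discarded.

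Next I would treat the reaction part, where the whole difficulty lies. The key structural observations are that $\solset$ is convex (it is cut out by the pointwise constraints $\sol_{\min}\le\sol[x]\le\sol_{\max}$) and bounded in $\hilbert=L^2(0,1)$ (the same constraints give $|\sol|^2\le\max(\sol_{\min}^2,\sol_{\max}^2)$). By convexity the segment $\sol_2+\theta w$, $\theta\in[0,1]$, stays in $\solset$, so the Fréchet differentiability of $g$ lets me write
\begin{equation*}
g(\sol_1)-g(\sol_2)=\int_0^1 Dg(\sol_2+\theta w)\,w\;d\theta ,
\end{equation*}
whence $\scal{w}{g(\sol_1)-g(\sol_2)}\le a\,|w|^2$ with $a:=\sup_{z\in\solset}\|Dg(z)\|$. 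When $g$ acts pointwise as a superposition operator, this reduces to the scalar mean value theorem on $[\sol_{\min},\sol_{\max}]$, and $a$ is simply the supremum of the derivative of the scalar nonlinearity over that compact interval. Combining the two estimates gives \eqref{monotone} with this constant $a\ge 0$.

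I expect the main obstacle to be the finiteness of $a$, namely bounding $\|Dg(z)\|$ uniformly over $\solset$. In infinite dimension the bounded set $\solset$ is not compact, so continuity of $Dg$ alone does not suffice: one must exploit the pointwise structure of $g$ (for a superposition operator the bound is immediate from the $C^1$ regularity of the scalar nonlinearity on the compact range $[\sol_{\min},\sol_{\max}]$), or add a mild boundedness hypothesis on $Dg$ over bounded subsets of $\hilbert$. Everything else—the integration by parts and the cancellation of the boundary terms via periodicity—is routine.
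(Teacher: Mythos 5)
Your proposal is correct and follows essentially the same route as the paper: split $f$ into diffusion plus reaction, dispose of the diffusion term by integration by parts (with the boundary terms cancelling by periodicity), and bound the reaction term by a mean-value argument along the segment $\sol_2+\theta w$ with constant $a=\sup_{z\in\solset}\|Dg(z)\|_{\mathcal{L}(\hilbert)}$. The one issue you flag --- finiteness of $a$, since $\solset$ is bounded but not compact in $L^2(0,1)$ --- is genuine, and the paper's own proof passes over it by simply writing $a:=\max_{y \in \solset}\Vert D_y g(y)\Vert_{\mathcal{L}(\hilbert)}$ without justification, so your explicit remedies (the superposition structure of $g$, or boundedness of $Dg$ on bounded sets) make your version, if anything, the more careful one.
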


\begin{lemma}\label{prdmax}
	The reaction-diffusion operator \eqref{prdop} is  maximal on the positively invariant set $\solset$ of Lemma \ref{prdsolset}.
\end{lemma}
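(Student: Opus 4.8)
The task is to solve, for every $h\in\solset$, the resolvent-type equation $\sol=f(\sol)+h$, which for the operator \eqref{prdop} is the semilinear periodic boundary value problem
\[
-\partial_{xx}\sol+\sol-g(\sol)=h \ \text{ on } (0,1),\qquad \sol(0)=\sol(1),\ \partial_x\sol(0)=\partial_x\sol(1),
\]
and to verify that the solution lies in $D(f)$. My plan is to use the method of sub- and supersolutions (monotone iteration), which is tailored to exploit the sign condition on $g$ at the boundary of the invariant box $\solset$ of Lemma \ref{prdsolset}.

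First I would regularize the nonlinearity. Since $g$ is Fr\'echet differentiable, its derivative is bounded on the compact interval $[\sol_{\min},\sol_{\max}]$, so I can fix $\lambda\geq 0$ large enough that $\tilde g(s):=g(s)+\lambda s$ is nondecreasing there, and rewrite the equation as $L\sol=\tilde g(\sol)+h$ with $L:=-\partial_{xx}+(1+\lambda)I$ under periodic boundary conditions. The operator $L$ is self-adjoint and coercive on the periodic space $H^1(0,1)$, hence boundedly invertible onto $H^2(0,1)$; moreover its inverse $L^{-1}$ is order-preserving (a maximum principle on the circle, with no genuine boundary to treat thanks to periodicity). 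I then set $T:=L^{-1}\big(\tilde g(\cdot)+h\big)$, which is compact from $C([0,1])$ to itself by the embedding $H^2(0,1)\hookrightarrow C^1([0,1])$.

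Second I would check that the constants $\underline\sol=\sol_{\min}$ and $\bar\sol=\sol_{\max}$ are ordered sub- and supersolutions. Using $h\in\solset$, so $\sol_{\min}\leq h\leq\sol_{\max}$, together with the inward-pointing sign of $g$ at the endpoints (that is $g(\sol_{\max})\leq 0$ and $g(\sol_{\min})\geq 0$, exactly the property underlying the positive invariance in Lemma \ref{prdsolset}), one verifies $L\bar\sol\geq\tilde g(\bar\sol)+h$ and $L\underline\sol\leq\tilde g(\underline\sol)+h$, i.e. $T\bar\sol\leq\bar\sol$ and $T\underline\sol\geq\underline\sol$. Because $\tilde g$ is nondecreasing and $L^{-1}$ is order-preserving, $T$ is monotone and maps the order interval $[\underline\sol,\bar\sol]=\{u:\sol_{\min}\leq u\leq\sol_{\max}\}$ into itself. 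Finally I would produce the fixed point either by iterating $\sol_{k+1}=T\sol_k$ from $\sol_0=\bar\sol$, obtaining a monotone nonincreasing sequence bounded below by $\underline\sol$ that converges in $L^2(0,1)$ and then in $H^2(0,1)$ by elliptic regularity and compactness of $L^{-1}$, or by invoking Schauder's theorem on the closed convex set $[\underline\sol,\bar\sol]\subset C([0,1])$. The limit $\sol\in H^2(0,1)$ solves $L\sol=\tilde g(\sol)+h$, hence $\sol=f(\sol)+h$, satisfies the periodic conditions, so $\sol\in D(f)$; since in addition $\sol_{\min}\leq\sol\leq\sol_{\max}$ we even get $\sol\in\solset$, which proves maximality on $\solset$.

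The main obstacle is the pair of facts that make the monotone scheme run: the order-preservation of $L^{-1}$ and the verification that the constant functions are genuine ordered sub/supersolutions. This is precisely where the sign of $g$ at the two endpoints is indispensable, and it is the same inward-pointing structure that yields the invariance of $\solset$ in Lemma \ref{prdsolset}. The periodic setting only mildly complicates the comparison principle (one argues on the circle, e.g. via the weak maximum principle for $L$ or by testing with $(\sol-\sol_{\max})_+$ and $(\sol_{\min}-\sol)_+$ and integrating by parts, the boundary terms cancelling by periodicity); the remaining ingredients, namely coercivity of $L$, compactness of $L^{-1}$, and the one-dimensional Sobolev embedding giving classical regularity, are standard.
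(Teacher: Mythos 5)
Your argument rests on an unstated specialization of the hypotheses: throughout, you treat $g$ as a superposition operator coming from a scalar $C^1$ function --- you write $\tilde g(s):=g(s)+\lambda s$ and ask it to be nondecreasing on $[\sol_{\min},\sol_{\max}]$, and you use the scalar inequalities $g(\sol_{\max})\leq 0$, $g(\sol_{\min})\geq 0$. In the setting of \eqref{prdop}, however, $g$ is an abstract Fr\'echet differentiable operator from $\hilbert=L^2(0,1)$ to itself, not necessarily local, subject only to a pointwise sign condition relating $g(\sol)[x]$ to $\sol[x]$. This is exactly where your scheme breaks: the whole monotone machinery rests on $u\mapsto g(u)+\lambda u$ being order-preserving for $\lambda$ large, and for a nonlocal $g$ no shift need achieve this. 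Indeed, if $u\leq v$ with $u[x_0]=v[x_0]$ at some contact point $x_0$, the shift contributes $\lambda(v[x_0]-u[x_0])=0$, so order-preservation forces $g(u)[x_0]\leq g(v)[x_0]$, i.e.\ quasimonotonicity of $g$ itself in the pointwise order. That property is automatic for superposition operators but is a genuine extra structural assumption for general $g$; it does not follow from a bounded Fr\'echet derivative together with the sign condition (which constrains $g(u)[x]$ only where $u[x]$ reaches the thresholds, not inside the box). Without it, $T$ is not monotone and does not map the order interval $[\underline\sol,\bar\sol]$ into itself, so both of your closing steps --- the monotone iteration and Schauder on the order interval --- collapse. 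Your constant sub- and supersolutions do remain valid for general $g$ (the sign condition is precisely what is needed there), but by themselves they are not enough.

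The paper's proof is structured so as to avoid the order structure altogether, which is what lets it work at the stated level of generality. It reduces maximality to the range condition $\mathrm{Ran}(-f-\lambda \mathrm{I})=\hilbert$ for a suitable $\lambda>0$, introduces the map $\mathcal{T}(\sol):=(\lambda \mathrm{I}-A)^{-1}\left[\bar\sol+g(\sol)\right]$ with $A=\partial_{xx}$, $D(A)=D(f)$, and shows --- using only the Lipschitz bound on $g$ (coming from its bounded Fr\'echet derivative) and the resolvent estimate $\Vert(\lambda\mathrm{I}-A)^{-1}\Vert_{\mathcal{L}(\hilbert)}\leq M/(\lambda-\omega)$ --- that for $\lambda$ large enough $\mathcal{T}$ maps a ball of $D(A)$ into itself; that ball is compact and convex in $\hilbert$ because $A$ has compact resolvent, and the Schauder fixed-point theorem concludes. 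Note that this route uses neither the sign condition on $g$ nor the set $\solset$. Where your approach does apply --- local reaction terms such as the paper's numerical example $g(\sol)=\epsilon\sol(1-\sol)$ --- it is correct and even yields a bit more than the paper's argument: the solution of $\sol=f(\sol)+h$ is produced inside $\solset$, with unit shift exactly as in the definition of maximality, and the proof exhibits the same inward-pointing mechanism that drives Lemma \ref{prdsolset}. To prove the lemma as stated, you should either restrict your claim to superposition operators, or replace the order-interval fixed point by a norm-compactness argument of the paper's type.
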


{The proofs of these results are provided in the Appendix.}

\section{Linear measure formulation and notions of solutions}\label{sec:linear}

In this section we introduce the Liouville equation\footnote{ {The Liouville equation is also called the continuity equation or the conservation of mass equation. It is a linear transport equation appearing e.g. in the Euler equations of fluid dynamics.}}
in an infinite dimensional separable and real Hilbert space $\hilbert$ in order to transform the nonlinear  evolution equation \eqref{ode} into a linear evolution equation with measures as unknowns. Then, we aim at proving the absence of relaxation gap between the nonlinear equation and its linear measure reformulation. No relaxation gap is instrumental to prove convergence of a numerical scheme, known as the moment-SOS hierarchy, based on well-known tools from {convex optimization}. This Liouville equation, as we will see, has to be considered in a weak sense, and this requires the introduction of  {cylindrical} test functions. In order to make this article as self-contained as possible, we recall their definition \cite[Definition 5.1.11]{ags08}.

\begin{definition}[Cylindrical function]
	Given an integer $d$, we denote by $\Pi_d(\hilbert)$ the space composed by all projective maps $\pi: \hilbert \rightarrow \mathbb{R}^d$ of the form:
	$$
	\pi(z) = (\langle z,e_1\rangle,\langle z,e_2\rangle,\ldots,\langle z,e_d\rangle), \: z \in \hilbert
	$$
	where $\lbrace e_1,\ldots, e_d\rbrace$ is any orthonormal family of vectors in $\hilbert$. We denote by $\testcyl(\hilbert)$ the space of cylindrical functions $\phi$ defined as $\phi:= \psi\circ \pi$ with $\pi\in\Pi_d(\hilbert)$ and $\psi \in \test_c^\infty(\mathbb{R}^d)$, smooth functions with compact support.
\end{definition}
By definition, cylindrical functions are Lipschitz continuous and everywhere Fr\'echet differentiable with respect to the weak topology of $\hilbert$. In the sequel, the cylindrical test functions depend on time, i.e. $\phi:=\psi(t,\pi(z))$, with $\psi \in \test_c^\infty([0,1]\times \mathbb{R}^d)${, which actually is equivalent to take $\phi \in \testcyl([0,{1}] \times \hilbert)$.}
{Cylindric  functions are necessary from a theoretical viewpoint, since our proofs are based on results of  \cite{ags08} relying on them. They are also crucial when developing the numerics because they can model polynomials in infinite dimensions.
	
	Following \cite{hikv23}, consider the time-dependent Dirac measure 
	$\mu_t=\delta_{y(t)}$
	supported on the strong solution $\sol(t)$ to evolution equation \eqref{ode} for a given initial condition $\sol_0 \in \hilbert$.
	Let $\phi \in \testcyl([0,{1}] \times \hilbert)$ be a cylindrical test function. It holds
	\[
	\int_0^{{1}}\dot{\phi}(t,\sol(t))dt = 
	\phi(1,\sol({1}))-\phi(0,\sol(0)) = \int_{\hilbert} \phi({1},z) d\mu_{{1}}(z) - \int_{\hilbert} \phi(0,z)d\mu_0(z),
	\]
	and using the chain rule
	\[
	\begin{split}
	\int_0^{{1}} \dot{\phi}(t,\sol(t))dt & = 
	\int_0^{{1}} \left(\partial_t \phi(t,\sol(t)) + \partial_{\sol} \phi(t,\sol(t))\dot{\sol}(t)\right) dt \\
	& = 
	\int_0^{{1}}  \int_{\hilbert} \left( \partial_t \phi(t,z) +  \langle \partial_{z} \phi(t,z), f(z)\rangle_{\hilbert} \right) d\mu_t(z)
	\end{split}
	\]
	where $\partial_t \phi(t,z)$ is the partial derivative of $\phi$ w.r.t. time, and $\partial_{z} \phi(t,z)$ is the Fr\'echet derivative of $\phi$ w.r.t. $z \in \hilbert$, a linear operator on $\hilbert$, which exists due to the definition of cylindrical functions.
	
	Equating both expressions, the Dirac measure $\mu_t = \delta_{y(t)}$ solves 
	the Liouville equation:
	\begin{equation}
	\label{testliouville}
	\int_0^{{1}} \int_\hilbert \left(
	\partial_t \phi(t,z) + \langle \partial_{z} \phi(t,z),f(z)\rangle\right) d\mu_t(z)dt = \int_\hilbert \phi({1},z) d\mu_{{1}}(z) - \int_\hilbert \phi(0,z) d\mu_0(z)
	\end{equation}
	which is a linear transport equation in $\mu : [0,{1}] \to \meas(\hilbert)$, where $\meas(\hilbert)$ denotes the vector  space of measures\footnote{{In this paper by measure we mean a positive Radon measure, i.e. locally finite and tight.}} on $\hilbert$, identified with bounded linear functionals on the vector space $\test(\hilbert)$ of continuous functions on $\hilbert$. More generally, let $\mu_t$ denote a solution of \eqref{testliouville}.
	{Time-dependent measures $\mu_t$ are called parametrized measures or Young measures in the calculus of variations literature.}
	
	Let $\nu \mapsto -\partial_{\sol} \cdot (f \nu)$ denote the linear operator which is adjoint to the linear operator
	$\psi \mapsto \partial_{\sol} \psi f(\sol)$, i.e. such that
	for every cylindrical function $\psi \in \testcyl(\hilbert)$ and every measure $\nu \in \meas(\hilbert)$, it holds
	$\int_{\hilbert} \partial_y \psi(z)f(z)d\nu(z) = - \int_{\hilbert} \psi(z) \: \partial_{z} \cdot (f\nu)(dz)$.
	The Liouville equation \eqref{testliouville} can then be equivalently written as an evolution equation on measures:
	\begin{equation}\label{liouville}
	\dot{\mu_t} + \partial_{z} \cdot (f\mu_t) = 0
	\end{equation}
	with $\mu_0=\delta_{y_0}$, for given initial data ${\sol_0}\in\hilbert$.
	
	Consider the semigroup $(\mathbb{S}(t))_{t\geq 0}$ generated by the operator $f$. Trajectories of  \eqref{ode} can be therefore written as $y(t,\cdot)=\mathbb{S}(t)y_0$ with $y_0 \in D(f)$ and for all $t\geq 0$. Observe that if the initial data is an arbitrary probability measure $\mu_0 \in \meas(\hilbert_1)$, then the push-forward measure $\mu_t = \mathbb{S}(t)_\# \mu_0$ through the flow map solves Liouville equation \eqref{liouville}.
	Indeed, in this case \eqref{testliouville} writes
	\[
	\begin{split}
	\int_0^{{1}} \int_\hilbert & \left(
	\partial_t \phi(t,z) + \langle \partial_z \phi(t,z), f(z)\rangle\right) d\mu_t(z)dt\\
	& = {
		\int_0^{{1}} \int_\hilbert \left(
		\partial_t \phi(t,\mathbb S(t)z) + \langle\partial_{z} \phi(t,\mathbb S(t)z),f(\mathbb S(t)z)\rangle\right) d\mu_0(z)dt }\\
	& = {
		\int_0^{{1}} \int_\hilbert \left(
		\partial_t \phi(t,\mathbb S(t)z) + \langle\partial_z \phi(t,\mathbb S(t)z),\dot{\bigl(\mathbb{S}(t)z\bigr)}\rangle\right) d\mu_0(z)dt }\\
	& = {
		\int_0^{{1}} \int_\hilbert \dot{\phi}(t,\mathbb S(t)z) d\mu_0(z)dt }\\
	& = {
		\int_{\hilbert} \phi({{1}},\mathbb S(1)z)d\mu_0(z) -\int_{\hilbert} \phi(0,\mathbb S(0)z)d\mu_0(z) }\\
	& = 
	\int_{\hilbert} \phi({{1}},z)\mu_{{1}}(dz) -\int_{\hilbert} \phi(0,z)\mu_0(dz).
	\end{split}
	\]
	It is worth mentioning that the (formal) computations performed all along this section require the solution to \eqref{ode} to be strong. Otherwise, the operator $f(\mathbb{S}(t)z)$, for all $t\geq 0$, would not exist. In other words, the weak formulation \eqref{testliouville} with test functions actually corresponds to the strong solution to \eqref{ode}. Therefore, as in the case of \eqref{ode}, we need different notions of solution. 
	\begin{definition}[Strong measure-valued solution]
		\label{def:strongmvsol}
		Given $\mu_0\in \meas(\hilbert_1)$, every strongly narrowly continuous measure curve\footnote{{A measure curve $\mu_t$ is strongly narrowly continuous if the real-valued map $t\in [0,1]\mapsto \int_\hilbert \phi(z)d\mu_t(z)\in\mathbb R$ is continuous for every continuous  bounded function $\phi$.}}  $\mu_t$ solving \eqref{testliouville} is called a strong measure-valued solution {of the Liouville equation \eqref{liouville} associated with evolution equation \eqref{ode}.} 
	\end{definition}
	
	\begin{remark}
		\label{rem:bounded}
		Note that the measure $\mu_t=\mathbb{S}(t)_\#\mu_0$ with $\mu_0\in \meas(\hilbert_1)$ is a strong measure-valued solution. In other words, existence of solution is straightforward. However, uniqueness cannot be deduced easily. 
		
		To prove uniqueness, we will use the Wasserstein distance, and this requires second order moments to be bounded. 
		Using the notion of strongly narrowly continuous measure curve, we can deduce that the second order moments are bounded as follows.
		We first observe that $z\mapsto |z|$ is continuous and bounded on $\hilbert$, by definition of the norm. It should then follow that if $\mu_t$ is a strongly narrowly continuous measure curve, then $t\in[0,1]\mapsto\int_{\hilbert}|z|^2d\mu_t(z)$ is bounded. A similar conclusion holds for any moment $\int_{\hilbert}|z|^pd\mu_t(z)$, $p\in\N$.
	\end{remark}

	\begin{definition}[Generalized measure-valued solution]
		Given $\mu_0 \in\meas(\hilbert)$ and a sequence $(\mu_0^n)_{n\in\N} \subset \meas(\hilbert_1)$, a generalized measure-valued solution to \eqref{liouville} is a strongly narrowly continuous measure curve $\mu_t\in \meas(\hilbert)$ for which there exists a sequence of strong measure-valued solutions $(\mu_t^n)_{n\in\mathbb{N}}\subset \meas(\hilbert_1)$ with $\mu_t^n$ converging narrowly\footnote{A sequence of measures $(\mu^n)_{n \in \N}$ converges narrowly to a measure $\mu$ if $(\int_\hilbert \phi(z)d\mu^n(z))_{n \in \N}$ converges to $\int_\hilbert \phi(z)d\mu(z)$ for every continuous  bounded function $\phi$.}
		to $\mu_t$ 
		when $\mu_0^n$ converges narrowly to $\mu_0$.
	\end{definition}
	
	\begin{remark}[The semilinear case]
		Again, by specifying the semilinear case as in Section \ref{secsemilinear}, one can reformulate \eqref{liouville} as follows:
		\begin{equation}\label{testliouvillesemi}
		\begin{split}
		\int_0^1 \int_\hilbert &\left(
		\partial_t \phi(t,z) + \langle A^\ast\partial_z \phi(t,z),z\rangle + \langle \partial_z \phi(t,z),g(z)\rangle \right) \mu_t(dz)dt = \\
		&\int_\hilbert \phi(1,z) \mu_1(dz) - \int_\hilbert \phi(0,z) \mu_0(dz)
		\end{split}
		\end{equation}
		{where $A^\ast$ is the adjoint operator of $A$.} Taking cylindrical test function $\phi\in \testcyl([0,T]\times D(A^\ast))$ and recalling furthermore that $D(A^\ast)$ is a dense subset of $\hilbert$ since $A$ generates a strongly continuous semigroup, the latter equation makes even sense when considering generalized solutions. 
	\end{remark}
	
	\section{Main result: no relaxation gap}\label{sec:nogap}
	
	
	In this section, we state and prove our main results: first, we show that strong (and generalized) solutions to the Liouville equation are uniquely defined; second, as a corollary, it can be deduced that, for any initial measure $\mu_0$ equal to the Dirac measure $\delta_{y_0}$ on $\hilbert_1$ (resp. $\hilbert$), the solution $\mu_t$ to the Liouville equation which is the Dirac measure $\delta_{y(t|y_0)}$ on $\hilbert_1$ (resp. $\hilbert$) corresponding to the strong solution to \eqref{ode} (resp. the generalized solution to \eqref{ode}). Note that the existence of (either strong or generalized) solutions to the Liouville equation can be proved easily if one sets $\mu_t=\mathbb{S}(t)_\#\mu_0$ with $\mu_0\in \meas(\hilbert)$, meaning that the well-posedness of \eqref{testliouville} reduces to proving the uniqueness.
	Our strategy relies on the use of the Wasserstein distance, which requires the second order moment of the (either strong or generalized) measure-valued solution to be bounded. Since the strong measure-valued solutions are strongly narrowly continuous, one can easily deduce that not only the second order moment is bounded, but also any higher order moments -- recall Remark \ref{rem:bounded} -- allowing therefore the use of the Wasserstein distance. 

	
	
	\begin{theorem}\label{thm:uniq}
		Suppose that operator $f$ is quasi-dissipative and maximal. Consider two initial measures $\check\mu_0$ and $\hat\mu_0 \in \meas(\hilbert)$, and any associated generalized measure-valued solutions $\check\mu_t$ and $\hat\mu_t$. Then, for any $t\in[0,1]$, we have
		\begin{equation}
		W_2(\check\mu_t,\hat\mu_t) \leq W_2(\check\mu_0,\hat\mu_0) e^{at}.
		\end{equation}
	\end{theorem}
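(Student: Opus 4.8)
The plan is to regard both solutions as absolutely continuous curves in the Wasserstein space $\mathcal{P}_2(\hilbert)$ of probability measures on $\hilbert$ with finite second moment, and to show that the squared distance $t\mapsto W_2^2(\check\mu_t,\hat\mu_t)$ obeys a Grönwall differential inequality whose rate is dictated by the quasi-dissipativity constant $a$. The moment bounds recorded in Remark \ref{rem:bounded} ensure that both curves genuinely live in $\mathcal{P}_2(\hilbert)$, so that $W_2$ is finite and the differential calculus with measures of \cite{ags08} applies. Note that the push-forward $\mathbb{S}(t)_\#\mu_0$ provides existence for free; the content of the theorem is that \emph{every} solution of the weak Liouville equation \eqref{testliouville} obeys the estimate, and in particular (taking $\check\mu_0=\hat\mu_0$) is unique. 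I would first establish the estimate for strong measure-valued solutions, whose measures are supported in $\hilbert_1 = D(f)$ where the velocity field $f$ is defined, and then recover the general case by approximation.

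The core step is to differentiate the squared Wasserstein distance along the flow. Both $\check\mu_t$ and $\hat\mu_t$ solve the continuity equation \eqref{liouville} with the \emph{same} velocity field $f$, so each is an absolutely continuous curve in $\mathcal{P}_2(\hilbert)$ with velocity field $f$. Invoking the formula for the derivative of $W_2^2$ between two such curves (the separable-Hilbert-space analogue of \cite[Theorem 8.4.7]{ags08}), evaluated against an optimal transport plan $\gamma_t$ between $\check\mu_t$ and $\hat\mu_t$, I would obtain, for almost every $t$,
\[
\frac{d}{dt}\,\tfrac{1}{2}W_2^2(\check\mu_t,\hat\mu_t) = \int_{\hilbert\times\hilbert}\scal{z_1-z_2}{f(z_1)-f(z_2)}\,d\gamma_t(z_1,z_2).
\]

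It then remains to insert the quasi-dissipativity inequality \eqref{monotone} pointwise under the integral: since $\scal{z_1-z_2}{f(z_1)-f(z_2)}\leq a|z_1-z_2|^2$ holds on the support of $\gamma_t$ (contained in the positively invariant set on which \eqref{monotone} is assumed), and since $\gamma_t$ is optimal, the right-hand side is bounded by $a\int|z_1-z_2|^2\,d\gamma_t = a\,W_2^2(\check\mu_t,\hat\mu_t)$. This yields $\frac{d}{dt}W_2^2(\check\mu_t,\hat\mu_t)\leq 2a\,W_2^2(\check\mu_t,\hat\mu_t)$, whence Grönwall's lemma and taking square roots give the claimed bound $W_2(\check\mu_t,\hat\mu_t)\leq W_2(\check\mu_0,\hat\mu_0)\,e^{at}$. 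For generalized measure-valued solutions I would pass to the limit along the defining approximating sequences of strong solutions, using lower semicontinuity of $W_2$ with respect to narrow convergence on the time-$t$ slice.

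I expect the main obstacle to be the rigorous justification of the differentiation formula in this infinite-dimensional setting, where $f$ is only densely defined: one must verify that the velocity field lies in $L^2(\mu_t;\hilbert)$ along the curve, that the curve is genuinely absolutely continuous in $W_2$ (and not merely a solution in the weak cylindrical sense of \eqref{testliouville}), and that the optimal plan can be paired with $f$ without integrability obstructions. This is exactly why the reduction to strong solutions supported in $\hilbert_1$ is carried out before approximating. A secondary difficulty is the strong-to-generalized passage: the definition of generalized solution supplies only narrow convergence of the initial data, so to control the right-hand side $W_2(\check\mu_0,\hat\mu_0)$ one must upgrade narrow convergence to $W_2$ convergence on the initial slice using the uniform higher-moment bounds of Remark \ref{rem:bounded}.
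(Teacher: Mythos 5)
Your proposal follows essentially the same route as the paper: the paper also first treats strong measure-valued solutions by differentiating $W_2^2$ along an optimal plan via the first-order calculus of \cite[Thm. 8.4.7]{ags08} (implemented there through the map $F_h\colon z\mapsto z+hf(z)$ and an appendix lemma showing $W_2(\mu_{t+h},{F_h}_{\#}\mu_t)=o(h)$, i.e. exactly the tangency/absolute-continuity verification you flag as the main obstacle), then applies quasi-dissipativity and Gr\"onwall, and finally passes to generalized solutions by upgrading narrow convergence to $W_2$ convergence using the moment bounds of Remark \ref{rem:bounded}. The only cosmetic difference is in the limiting step, where the paper uses a triangle-inequality argument requiring Wasserstein convergence of the slices at time $t$, while you use lower semicontinuity of $W_2$ under narrow convergence at time $t$ plus $W_2$ convergence at time $0$; both rest on the same ingredients.
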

	
	\begin{proof}
		The proof is divided in two parts. In the first part we consider $\check\mu_0$ resp. $\hat\mu_0 \in \meas(\hilbert_1)$ and the associated strong measure-valued solutions $\check\mu_t$ resp. $\hat\mu_t$ to \eqref{testliouville}.
		Let us consider $V(t)=W_2^2(\check\mu_t,\hat\mu_t)$. Its time derivative is given by
		\[
		\frac{d}{dt} V(t) = \lim_{h\to0} \frac{1}{h}\bigl(W_2^2(\check\mu_{t+h},\hat\mu_{t+h}) - W_2^2(\check\mu_{t},\hat\mu_{t}) \bigr) .
		\]
		where $W_2(\mu_1,\mu_2)$ is the Wasserstein distance between two probability measures $\mu_1,\mu_2 \in P(\hilbert)$, i.e.
		\[
		W^2_2(\mu_1,\mu_2):=\min_{\gamma \in \Gamma(\mu_1,\mu_2)} \int_{\hilbert^2} |z_1-z_2|^2 d\gamma(z_1,z_2)
		\]
		where
		\[
		\Gamma(\mu_1,\mu_2) := \{\gamma \in P(\hilbert^2) : \pi^{z_1}_{\#}\gamma = \mu_1, \: \pi^{z_2}_{\#}\gamma = \mu_2\}
		\]
		is the set of transport plans between $\mu_1$ and $\mu_2$.
		The notation  $g_{\#}\gamma$ stands for the push-forward measure of $\gamma$ through a map $g$. The map $\pi^{z_1} : \hilbert^2 \to \hilbert, (z_1,z_2) \mapsto z_1$ is the projection on the $z_1$ coordinate, so that $\pi^{z_1}_{\#}\gamma$ is the $z_1$ marginal of $\gamma$, and similarly for $\pi^{z_2}$. Let us also denote by
		\[
		\Gamma^*(\mu_1,\mu_2):=\{\gamma \in \Gamma(\mu_1,\mu_2) : W^2_2(\mu_1,\mu_2)=\int_{\hilbert^2} |z_1-z_2|^2 d\gamma(z_1,z_2)\}
		\]
		the set of optimal transport plans.
		

		For some small real $h$, we introduce the map $F_h\colon z\mapsto z+hf(z)$, with $z\in\hilbert_1$. Using Lemma \ref{lemma:wasserstein_distance_approx} in the Appendix, the following property holds:
		\begin{gather}
		W_2(\mu_{t+h},{F_h}_{\#}\mu_t) = o(h).
		\end{gather}
		As a consequence, it is sufficient to consider the limit of
		\[
		\frac{1}{h}\bigl(W_2^2({F_h}_{\#}\check\mu_t,{F_h}_{\#}\hat\mu_t)) - W_2^2(\check\mu_{t},\hat\mu_{t}) \bigr)
		\]
		when $h\to0$ in order to deduce the time derivative of $V$. Let us now use first order differential calculus, as in the proof of \cite[Thm. 8.4.7]{ags08}. Select $\gamma \in \Gamma^*(\check\mu_t,\hat\mu_t)$, and define
		\[
		\gamma_h:= \left(F_h\circ\pi^{z_1}, F_h\circ \pi^{z_2}\right)_{\#}\gamma
		\]
		where $\circ$ is the composition operator, and note that $\gamma_h\in\Gamma({F_h}_{\#}\check\mu_t,{F_h}_{\#}\hat\mu_t)$. By definition of the Wasserstein distance:
		\begin{gather}\label{eq:inequality}
		W_2^2({F_h}_{\#}\check\mu_t,{F_h}_{\#}\hat\mu_t)) \leq \int_{\hilbert^2} |z_1-z_2|^2 d\gamma_h(z_1,z_2) .
		\end{gather}
		On the other hand,
		\begin{align*}
		\int_{\hilbert^2} |z_1-z_2|^2 d\gamma_h(z_1,z_2) &= \int_{\hilbert^2} |z_1+hf(z_1)-z_2-hf(z_2)|^2 d\gamma(z_1,z_2) \\
		&= \int_{\hilbert^2} |z_1-z_2|^2 d\gamma(z_1,z_2) \\
		&\quad + 2h \int_{\hilbert^2} \scal{z_1-z_2}{f(z_1)-f(z_2)} d\gamma(z_1,z_2) + O(h^2) \\
		&= W_2^2(\check\mu_t,\hat\mu_t) + 2h \int_{\hilbert^2} \scal{z_1-z_2}{f(z_1)-f(z_2)} d\gamma(z_1,z_2) + O(h^2) .
		\end{align*}
		Combining this equality with in the previous inequality \eqref{eq:inequality}, and letting $h\to0^-$ and $h\to0^+$ successively, one obtains
		\[
		\frac{d}{dt} V(t) = 2 \int_{\hilbert^2} \scal{z_1-z_2}{f(z_1)-f(z_2)} d\gamma(z_1,z_2) .
		\]
		Using the quasi-dissipativity of operator $f$ and the Gr\"onwall lemma we have 
		\begin{gather}\label{eq:thm_strong_soln}
		W_2(\check\mu_t,\hat\mu_t) \leq W_2(\check\mu_0,\hat\mu_0) e^{at} .
		\end{gather}
		For the second part of the proof, let $\check\mu_0$ resp. $\hat\mu_0 \in \meas(\hilbert)$ 
		{with its corresponding narrowly converging sequence $(\check{\mu}^n_0)_{n\in \N}$ resp.
			$(\hat{\mu}^n_0)_{n\in \N} \subset \meas(\hilbert_1)$.} Let us consider the associated generalized measured-valued solution $\check\mu_t$ resp. $\hat\mu_t \in  \meas(\hilbert)$ to \eqref{testliouville}
		{with its corresponding narrowly converging sequence of strong solutions $(\check\mu^n)_{n\in \mathbb{N}}$ resp. $(\hat\mu^n)_{n\in \mathbb{N}}\subset \meas(\hilbert_1)$.} 
		The narrow convergence and the convergence of the second order moments implies convergence in Wasserstein metric \cite[Rmk. 7.1.11]{ags08}.
		Therefore, one has, for every $n\in\mathbb{N}$,
		\[
		\begin{split}
		W^2_2(\check\mu_t,\hat\mu_t)\leq W_2^2 (\check\mu_t,\check\mu_t^n) + W_2^2(\check\mu_t^n,\hat\mu^n_t) + W^2_2(\hat\mu_t^n,\hat\mu_t). 
		\end{split}
		\]
		For large $n\in\mathbb{N}$ we have $W_2^2 (\check\mu_t,\check\mu_t^n) \le \epsilon /3$ and $W^2_2(\hat\mu_t^n,\hat\mu_t) \le \epsilon/3$. For the second term in the above inequality we observe that 
		\[
		W_2^2(\check\mu_t^n,\hat\mu^n_t) \le W_2(\check\mu^n_0,\hat\mu^n_0) e^{at} \le W_2(\check\mu_0,\hat\mu_0) e^{at} +\epsilon/3.
		\]
		which follows from \eqref{eq:first_order_approx} and the convergence of $\check\mu^n_0$ (resp. $\hat\mu^n_0$) to $\check\mu_0$ (resp. $\hat\mu_0$) w.r.t. Wasserstein metric. We deduce that, as $n$ goes to infinity, we have, for a.e. $t\geq 0$
		$$
		W_2(\check\mu_t,\hat\mu_t) \leq W_2(\check\mu_0,\hat\mu_0) e^{at},
		$$
		concluding the proof of the result.\end{proof}
	\begin{corollary}[{\bf No relaxation gap}]\label{nogap}
		Suppose that $f$ is quasi-dissipative and maximal. If $\mu_0=\delta_{\sol_0}$ with $y_0\in \hilbert_1$ resp. $\hilbert$ then $\mu_t=\delta_{\mathbb S(t) \sol_0}$ is the only strong resp. generalized
		measure-valued solution of \eqref{testliouville} associated with $\mu_0$.
	\end{corollary}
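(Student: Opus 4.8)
The plan is to read off the corollary directly from the contraction estimate of Theorem \ref{thm:uniq}, handling existence and uniqueness separately. Existence is already contained in the push-forward construction of Section \ref{sec:linear}: the push-forward of a Dirac mass through the flow map is again a Dirac mass, $\mathbb{S}(t)_\# \delta_{\sol_0} = \delta_{\mathbb{S}(t)\sol_0}$, and by the chain-rule computation recorded just before Definition \ref{def:strongmvsol} the curve $t \mapsto \mathbb{S}(t)_\# \mu_0$ solves \eqref{testliouville} whenever $\mu_0 \in \meas(\hilbert_1)$. Taking $\mu_0 = \delta_{\sol_0}$ with $\sol_0 \in \hilbert_1$ thus exhibits $\delta_{\mathbb{S}(t)\sol_0}$ as a strong measure-valued solution; for $\sol_0 \in \hilbert$ only, the same curve is a generalized measure-valued solution, arising as the narrow limit of the strong solutions $\delta_{\mathbb{S}(t)\sol_0^n}$ along any sequence $\sol_0^n \in \hilbert_1$ with $\sol_0^n \to \sol_0$, the convergence being furnished by strong continuity of the semigroup $(\mathbb{S}(t))_{t\geq 0}$.

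For uniqueness I would invoke Theorem \ref{thm:uniq} with coinciding initial data. Let $\hat\mu_t$ be an arbitrary strong (resp. generalized) measure-valued solution associated with $\mu_0 = \delta_{\sol_0}$, and set $\check\mu_t := \delta_{\mathbb{S}(t)\sol_0}$, the explicit solution just constructed. Applying the theorem to the pair $\check\mu_0 = \hat\mu_0 = \delta_{\sol_0}$ and using $W_2(\delta_{\sol_0},\delta_{\sol_0}) = |\sol_0 - \sol_0| = 0$, the estimate yields $W_2(\check\mu_t,\hat\mu_t) \leq 0$ for every $t \in [0,1]$. Since $W_2$ is a genuine metric on the space of probability measures with bounded second moment, this forces $\hat\mu_t = \check\mu_t = \delta_{\mathbb{S}(t)\sol_0}$ for all $t$, which is precisely the claimed uniqueness and hence the absence of a relaxation gap: the linear measure reformulation recovers exactly the deterministic trajectory of the nonlinear equation.

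The argument presents no genuine obstacle beyond Theorem \ref{thm:uniq} itself; the only points deserving care are that the explicit push-forward lands in the correct solution class in each regularity regime (strong for $\sol_0 \in \hilbert_1$, generalized for $\sol_0 \in \hilbert$), and that the vanishing Wasserstein distance legitimately upgrades to equality of measures. The latter is exactly the positive-definiteness of the $W_2$ metric, which is available here because the strongly narrowly continuous curves in play have bounded second moments, as guaranteed by Remark \ref{rem:bounded}.
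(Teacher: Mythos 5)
Your proposal is correct and follows essentially the same route as the paper: both establish $\delta_{\mathbb{S}(t)\sol_0}$ as a solution via the push-forward construction and then apply the contraction estimate of Theorem \ref{thm:uniq} with coinciding initial Dirac data to force $W_2(\mu_t,\delta_{\mathbb{S}(t)\sol_0})=0$, hence equality of measures. Your write-up is in fact slightly more explicit than the paper's on the existence step (the narrow-limit argument for $\sol_0\in\hilbert$) and on why vanishing $W_2$ implies equality, but the core argument is identical.
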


	\begin{proof} Let $\mu_0 = \delta_{\sol_0}$ with $\sol_0 \in \hilbert_1$ then $\delta_{\mathbb S(t)\sol_0}$ is a solution to \eqref{testliouville}. We assume there exists another solution $\mu_t$ to \eqref{testliouville} with the same initial condition $\mu_0 = \delta_{\sol_0}$. Let us consider the time-dependent function $V(t):=W^2_2(\mu_t,\delta_{\sol(t)}) : [0,{1}] \to \R$. Then using the results from Theorem \ref{thm:uniq} we get
		$W_2(\mu_t, \delta_{\sol(t)}) = 0$
		for all time $t \in [0,1]$ as $\frac{d V(t)}{dt} \le e^{at} V(0)$ and $V(0)=0$.
		Similar result holds for $\mu_0 = \delta_{\sol_0}$ with $\sol_0 \in \hilbert$.
	\end{proof}

	\section{Infinite-dimensional moment-SOS hierarchy}\label{sec:hierarchy}
	
	\subsection{Polynomials and moments}
	\label{polynomials}
	
	Polynomials can be expressed as linear combinations of monomials. In the infinite-dimensional setting, polynomials are called
	Wiener polynomials or chaos polynomials \cite{w38}, because they have been originally introduced for stochastic differential equations. Let $c_0(\mathbb{N})$ the set of integers with finitely many non-zero elements, i.e. if $a=(a_1,a_2,\ldots)\in c_0(\mathbb{N})$, then $\mathrm{card}\lbrace i\in\mathbb{N}\mid a_i\neq 0\rbrace<\infty$. In this setting, and considering $z\in\hilbert$, a \textit{monomial} of degree $a\in c_0(\mathbb{N})$ is defined as
	$$
	z^a := \prod_{i=1}^\infty \langle z,\phi_i\rangle^{a_i}.
	$$
	This is a product of finitely many powers of linear functionals $\langle z,\phi_i\rangle$ with $i = 1,2,\ldots$ and given functions $\phi_i$ in $\hilbert'$, the topological dual of $\hilbert$.
	Polynomials in $\hilbert$ are then defined as linear combinations of monomials, i.e.
	\[
	\begin{split}
	p:\: \hilbert &\rightarrow \mathbb{R}\\
	z &\mapsto \sum_{a\in \mathrm{spt}(p)} p_a z^a
	\end{split}
	\]
	where the sum runs over $\mathrm{spt}(p)$, the support of $p$, a (possibly infinite) countable subset of $c_0(\N)$.
	Let $\R[z]$ denote the ring of polynomials.
	These polynomials have two types of degrees. To follow the terminology introduced in \cite{hikv23,hr24}, the \textit{algebraic degree} is defined as
	\[
	d:=\max_{a\in\mathrm{spt}(p)}\sum_{i=1}^\infty a_i. 
	\]
	which corresponds to the total degree in the finite-dimensional setting. The second notion of degree, namely the \textit{harmonic degree}, is defined as 
	\[
	n:=\max_{a\in \mathrm{spt}(p)} \lbrace i\in\mathbb{N}\mid a_i\neq 0\rbrace,  
	\]
	which corresponds to the number of variables in the finite-dimensional setting.
	
	Given a measure $\nu$ on $\hilbert$ and an index $a \in c_0(\N)$ the quantity
	\begin{equation}\label{moment}
	m_a := \int_\hilbert z^a d\nu(z)
	\end{equation}
	is called the \textit{moment} of order $a$ of measure $\nu$.
	
	Given a sequence $m:=(m_a)_{a \in c_0(\N)}$, let us define the Riesz functional
	\[
	\begin{split}
	\ell_m :\: \R[z] &\rightarrow \R\\
	p &\mapsto \sum_{a\in \mathrm{spt}(p)} p_a m_a.
	\end{split}
	\]
	If the sequence $m$ has a representing measure $\nu$, i.e. if \eqref{moment} holds for $a \in c_0(\N)$, then
	\[
	\ell_m(p) = \int_\hilbert p(z)d\nu(z).
	\]
	
	\subsection{Moment and localizing matrices}\label{momloc}
	
	Let us derive conditions satisfied by the moments of a measure $\mu$ supported on a subset $\solset$ of $\hilbert$. Let  $\solset := \{z \in \hilbert : p(z) \geq 0\}$ be defined as the compact superlevel set of a given polynomial $p$, see \cite{h24} for examples.
	
	Since $\mu$ is positive, the Riesz functional corresponding to the sequence $m$ of moments of $\mu$ must be positive on squares, i.e. $\ell_m(q^2_0) \geq 0$ for all $q_0 \in  \R[z].$ It must also be positive on $\solset$, i.e. $\ell_m(pq^2_1) \geq 0$ for all $q_1 \in  \R[z].$ It turns out that these necessary conditions are also sufficient for sequence $m$ to have a representing measure, this is the dual moment formulation of Jacobi's Positivstellensatz that can be found in \cite[Theorem 3.9]{ikkm23b}. See \cite[Theorem 2.1]{gkm14} for a
	reformulation of Jacobi's Positivstellensatz -- whose original statement is \cite[Theorem 4]{j01} -- which is valid for sums of squares (SOS) representations of positive polynomials in an Archimedean quadratic module of any unital commutative algebra, and in particular for the algebra generated by elements of $\R[z]$.
	
	Numerically, the sequence $m$ must be truncated up to a given algebraic and harmonic degree, i.e. the above positivity conditions are enforced for bounded degree polynomials. The positivity condition $\ell_m(q^2_0) \geq 0$ resp. $\ell_m(pq^2_1) \geq 0$ for bounded degree $q_0$ resp. $q_1$ is formulated as positive semidefiniteness of a symmetric matrix depending linearly on $m$, the so-called \textit{moment matrix} resp. \textit{localizing matrix}. Positive semidefiniteness of the moment and localizing matrices
	results in finite-dimensional convex linear matrix inequality (LMI) in the truncated moment sequence. These conditions are necessary for the entries of the truncated sequence to be moments of a measure on $\solset$. They are called moment relaxations, and the truncated sequence entries are called pseudo-moments.
	The LMI conditions grow in size with the truncation degree, and they become sufficient asymptotically, i.e. for infinitely many constraints. This is the essence of the infinite-dimensional moment-SOS hierarchy,
	as described in \cite[Section 5]{hikv23} and \cite{hr24}.
	See also \cite{hkl20} and \cite{n23} for the finite-dimensional moment-SOS hierarchy and its applications.
	
	\subsection{Moment formulation of the Liouville equation}
	
	The weak formulation of the Liouville equation \eqref{testliouville} becomes a linear equation in the moments of measure $\mu_t(dz)dt$, provided we use monomials of both $t$ and $z$ in the test functions
	\begin{equation}\label{test}
	\phi_a(t,z) = t^{a_0} \prod_{i=1}^{\infty} \scal{\phi_i}{z}^{a_i}
	\end{equation}
	for each given index $a=(a_0,a_1,a_2,\ldots) \in c_0(\N)$.
	Let us define resp.
	\begin{equation}\label{moments}
	\begin{array}{rcl}
	m^{0,1}_a & := & \int_0^1 \int_{\solset} \phi_a(t,z)\mu_t(dz)dt \\
	m^0_a & := & \int_{\solset} \phi_a(0,z)\mu_0(dz) \\
	m^1_a & := & \int_{\solset} \phi_a(1,z)\mu_1(dz) \\
	\end{array}
	\end{equation}
	the occupation resp. initial and terminal moments of order $a$.
	
	For each given index $a \in c_0(\N)$, the Liouville equation \eqref{testliouville} corresponds to a linear equation
	\begin{equation}\label{linequ}
	L_a(m^{0,1})+m^1_a = m^0_a.
	\end{equation}
	where $L_a$ is a given linear functional of the sequence of occupation moments $m^{0,1}=(m^{0,1}_a)_{a \in c_0(\N)}$, and $m^0:=(m^0_a)_{a \in c_0(\N)}$ resp. $m^1:=(m^1_a)_{a \in c_0(\N)}$ is the sequence of initial resp. terminal moments, as defined in \eqref{moments}.
	
	By enumerating all index sequences, we generate countably  infinitely many linear moment equations \eqref{linequ}, resulting in an infinite-dimensional linear system of equations in the moment sequences $m^{0,1}$, $m^0$ and $m^1$.
	Each equation involves infinitely many moments, so for computational purposes we have to truncate the infinite sums in the expression of linear functional $L_a$ to finitely many moments. Let us denote by $L^h_a$ the corresponding linear functional truncated to harmonic degree $h$. The remaining terms are absorbed by a error residual denoted $e^h_a$, so that linear equation \eqref{linequ} becomes
	\begin{equation}\label{linres}
	L^h_a(m^{0,1})+m^1_a=m^0_a+e^h_a(m^{0,1}).
	\end{equation}
	It is possible to get estimates of the error residual, but for computational purposes it may suffice to minimize its quadratic norm.
	
	\subsection{Moment-SOS hierarchy}
	
	The moment-SOS hierarchy then consists of minimizing the quadratic norm of the error residual subject to the linear equations \eqref{linres} for all $a \in c_0(\N)$ such that $|a|_1 \leq r$, for increasing values of $r$, and the LMI conditions on the moment and localizing matrices described in Section \ref{momloc}. Since there is a unique measure solution to the Liouville equation, and a measure on compact set is uniquely determined by its moments, the overall numerical procedure generates approximations to the moments of the solutions that converge pointwise. Note also that more general optimization problems can be formulated in the same framework e.g. optimization over initial conditions, introduction of control parameters etc. Precise statements and convergence proofs lie however outside of the scope of this paper.
	
	Finally, at a given relaxation order $h$, we can approximate the solution of the original equation by using an infinite-dimensional extension of the Christoffel-Darboux polynomial \cite{h24}, in analogy with what was achieved in the finite-dimensional case \cite{mpwhl21}. This approximation strategy, together with its convergence guarantees, are also outside of the scope of this paper.
	
	\section{Numerical results for polynomial reaction-diffusion }\label{sec:examples}
	
	Now let us follow the approach of  {Section \ref{sec:hierarchy}} and formulate the Liouville equation for a quadratic diffusion operator
	\[
	f(\sol) := \partial_{xx} \sol + \epsilon \sol(1-\sol)
	\]
	for given $\epsilon \geq 0$, with domain 
	\[
	D(f) := \lbrace \sol \in H^2(0,1) : \sol\didier{[0]} = \sol\didier{[1]}, \partial_x \sol\didier{[0]} = \partial_x \sol\didier{[1]} \rbrace
	\]
	the periodic functions with square integrable weak derivatives on $[y_{\min},y_{\max}]:=[0,1]$.
	Let $T:=1$ be the terminal time.
	
	In the weak formulation \eqref{testliouville} of the Liouville equation,  {consider monomial test functions \eqref{test} with dual functions $\phi_i \in \hilbert'$.} In our case, for periodic functions of $\hilbert=\hilbert'=L^2(0,1)$, the natural choice would be the complex exponentials $\phi_i(x)=e_{k_i}(x):=\exp(2\pi\sqrt{-1}k_ix)$ for a given $k_i \in \Z$. Test function \eqref{test} has algebraic degree $|a|_1:=\sum_{i=0}^{\infty} a_i$ and harmonic degree $|k|_{\infty}:=\max_{i=1}^{\infty} |k_i|$.
	
	Define the Fourier transform $F:L^2(0,1) \to \ell_2(\Z), \:\: z \mapsto c=(c_k)_{k\in\Z}$ where $c_k:=\scal{e_k}{z}$ is the $k$-th Fourier coefficient of $z \in \hilbert$.
	The adjoint of $F$ is the inverse Fourier transform $F^*:\ell_2(\Z) \to L^2(0,1), \:\: c \mapsto z = \sum_{a\in\Z} c_a e_a$. 
	Given a measure $\mu$ on $\hilbert$, let $\nu := F_{\#}\mu$ denote its push-forward measure through $F$, so that for our choice of monomials test functions \eqref{test} it holds
	\[
	\int_{0}^1 \int_{\mathscr H} \phi(t,z)\mu_t(dz)dt = \int_0^1 \int_{\ell^2} t^{a_0} c^{a_1}_{k_1} \ldots c^{a_d}_{k_d} \nu_t(dc)dt
	\]
	i.e. moments of $\mu$ become standard algebraic moments of $\nu$.  {Consistently with \eqref{moments},} let us define resp.
	\[
	\begin{array}{rcl}
	m^{0,1}_a & := & \int_0^1 \int_{\ell^2} t^{a_0} c^{a_1}_{k_1} \ldots c^{a_d}_{k_d} \nu_t(dc)dt \\
	m^0_a & := & 0^{a_0} \int_{\ell^2} c^{a_1}_{k_1} \ldots c^{a_d}_{k_d} \nu_0(dc) \\
	m^1_a & := & 1^{a_0} \int_{\ell^2} c^{a_1}_{k_1} \ldots c^{a_d}_{k_d} \nu_1(dc)
	\end{array}
	\]
	the occupation resp. initial and terminal moments of order $a$.
	
	Now observe that reporting these test functions in linear equation \eqref{testliouville} we can express the following terms with our moments:
	\[
	\begin{array}{rclcl}
	\int_{\hilbert}\phi(0,z)\mu_0(dz) & = & 0^{a_0} \int_{\ell^2} c^{a_1}_{k_1} \ldots c^{a_d}_{k_d} \nu_0(dc) & = & m^0_a \\
	\int_{\hilbert}\phi(1,z)\mu_1(dz) & = & 1^{a_0} \int_{\ell^2} c^{a_1}_{k_1} \ldots c^{a_d}_{k_d} \nu_1(dc)  & = &  m^1_a \\
	\int_0^1 \int_{\hilbert} \partial_t \phi(t,z)\mu_t(dz)dt & = &
	a_0 \int_0^1 \int_{\ell^2} t^{a_0-1} c^{a_1}_{k_1} \ldots c^{a_d}_{k_d} \nu_t(dc)dt
	& = & a_0 m^{0,1}_{a_0-1,a_1,\ldots,a_d}.
	\end{array}
	\] 
	The Fr\'echet derivative of $\phi$ with respect to $y \in \hilbert$ at $f  \in \hilbert$ is given by
	\[
	\partial_z \phi(t,z)(f) = \sum_{j=1}^d a_j
	\frac{\scal{\phi_j}{f}}{\scal{\phi_j}{z}}
	\phi(t,z)
	\]
	i.e.
	\[
	\begin{split}
	\partial_z \phi(t,z)(f) & =  a_1 \scal{\phi_1}{f} t^{a_0} \scal{\phi_1}{z}^{a_1-1} \scal{\phi_2}{z}^{a_2} \cdots \scal{\phi_d}{z}^{a_d} \\ & +  a_2 \scal{\phi_2}{f} t^{a_0} \scal{\phi_1}{z}^{a_1} \scal{\phi_2}{z}^{a_2-1} \cdots \scal{\phi_d}{z}^{a_d} + \cdots
	\end{split}
	\]
	The Fourier coefficients of $f$ are given by
	\[
	\begin{split}
	\scal{\phi_i}{f} = \scal{e_{k_i}}{f} & =   - \scal{e_{k_i}}{\partial_{xx}z} - \scal{e_{k_i}}{z} + \scal{e_{k_i}}{z^2} \\
	& =  (2\pi k_i)^2\scal{e_{k_i}}{z} - \scal{e_{k_i}}{z} + \scal{e_{k_i}}{\sum_{k \in \Z} (\sum_{l \in \Z} c_l c_{k-l}) e_k} \\
	& = ((2\pi k_i)^2-1)c_{k_i} + \sum_{l \in \Z} c_l c_{k_i-l} \\
	\end{split}
	\]
	where the non-linear term
	\begin{equation}\label{nonlinear}
	z^2 = (\sum_{k \in \Z} c_k e_k)(\sum_{l \in \Z} c_l e_l) = \sum_{k,l \in \Z} c_k c_l e_{k+l} = \sum_{k \in \Z} (\sum_{l \in \Z} c_l c_{k-l}) e_k
	\end{equation}
	depends on infinitely many Fourier coefficients. 
	Equation \eqref{testliouville} can then be written as the linear moment equation
	\[
	\begin{split}
	& \int_{\hilbert} \phi(1,z)\mu_1(dz)-\int_{\hilbert} \phi(0,z)\mu_0(dz)
	= \int_0^1 \int_{\mathscr H}
	\left(\partial_t \phi(t,z) +  \partial_z \phi(t,z)(f)\right)\mu_t(dz) dt \\
	&  = 1^{a_0} \int_{\ell^2} c^{a_1}_1 \cdots c^{a_d}_d \nu_1(dc) - 0^{a_0} \int_{\ell^2} c^{a_1}_1 \cdots c^{a_d}_d \nu_0(dc) = 
	a_0 \int_0^1 \int_{\ell^2} t^{a_0-1} c^{a_1}_1 \ldots c^{a_d}_d \nu_t(dc)dt \\
	& + a_1 \int_0^1 \int_{\ell^2} t^{a_0} \left(((2\pi k_1)^2-1)c_{k_1} + \sum_{l \in \Z} c_l c_{k_1-l}\right) c^{a_1-1}_{k_1} c^{a_2}_{k_2} \cdots c^{a_d}_{k_d} \nu_t(dc)dt \\
	& + a_2 \int_0^1 \int_{\ell^2} t^{a_0} \left(((2\pi k_2)^2-1)c_{k_2} + \sum_{l \in \Z} c_l c_{k_2-l}\right) c^{a_1}_{k_1} c^{a_2-1}_{k_2} \cdots c^{a_d}_{k_d} \nu_t(dc)dt  + \cdots 
	\end{split}
	\]
	{which has the linear form \eqref{linequ}.}
	
	Let $\epsilon=0.1$ and $h=4$. The initial moment sequence $m^0$ is given, corresponding to a Gaussian distribution with mean $0$ and standard deviation $10^{-1/2}$. 
	
	At relaxation order $r=4$, the resulting semidefinite optimization problem has moment matrices of size 165, with moment vectors of size 3575 subject to 1100 linear equations. We solve this optimization problem with MOSEK, and we obtain approximate occupation moments $\tilde{m}^{0,1}$ and terminal moments $\tilde{m}^1$, to be compared with the occupation moments $m^{0,1}$ and terminal moments $m^1$ computed by a finite difference scheme as described in \cite{r23}. 
	
	The percentage of entries of the occupation resp. terminal moments that match within relative accuracy less than $10^{-3}$ is equal to $91\%$ resp. $93\%$.
	
	\section{Conclusion}\label{sec:conclusion}
	
	Nonlinear nonconvex optimization over PDEs can be reformulated as a linear optimization problem in a measure space, but it may happen that the optimal value on measures differ from the optimal value of the original problem: measures satisfying the transport equation may not correspond to solutions of the nonlinear PDE. In this case we say that there is a relaxation gap. In this paper we prove that this does not happen for a broad class of nonlinear PDEs, namely evolution equation on Hilbert space with a nonlinear operator satisfying quasi-monotonicity conditions.
	
	The important practical consequence of no relaxation gap is that we can guarantee the convergence of numerical approximation schemes based on an infinite-dimensional version of the moment-SOS hierarchy.
	
	Our approach is illustrated numerically on a simple reaction-diffusion equation, but our setup allows readily extensions to optimization problems over PDEs, such as approximation of the region of attraction (defined as the largest set of initial data compatible with the equation and the constraints), or optimal control \cite{t10}. 
	
	Further works in this line might be followed by considering the generator to be {\it locally quasi-dissipative}, which is a concept explained in details in \cite[Chapter 6]{ik02}. In particular, quasi-linear equations (including for instance conservation laws, the Korteweg-de Vries equation or the Kuramoto-Sivashinsky equation) can be studied through this framework, see \cite[Chapter 6.9]{ik02}. Indeed, in an earlier work \cite{mwhl20}, concentration of the measure-valued solution has been proved thanks to some contraction inequality (deduced from entropy inequalities) similar to the one obtained in the present paper and that follows from the quasi-dissipative property of the generator.
	
	Techniques from functional analysis distinct from dissipative arguments could also be used to prove the absence of a relaxation gap in the measure formulation. For example, the absence of relaxation gap for the problem of approximating the region of attraction of controlled ordinary differential equations was proved in \cite{hk14} with the help of Ambrosio's superposition principle. In \cite[Section 7.2]{at17}, the authors first recall the superposition principle in finite-dimensional Euclidean spaces (for ordinary differential equations), then in ${\mathbb R}^{\infty}$ (e.g. for stochastic differential equations) and then in abstract metric spaces (e.g., evolutionary PDE in Hilbert space or Banach space). In all these setups, it
	is fundamental to have a deeper understanding of the moment problem for measures supported in such infinite-dimensional spaces.
	First attempts along these lines are reported in \cite{iknm23,hikv23,hr24}.
	
	\section*{Acknowledgments}
	This work benefited from feedback from Giovanni Fantuzzi, David Goluskin, Salma Kuhlmann, Maria Infusino, Emmanuel Tr\'elat and Victor Vinnikov.

	\section*{Appendix}
	
	\subsection*{Proof of Lemma \ref{prdsolset}}
	
	We use Stampacchia’s truncation method as in the proof of \cite[Theorem 10.3]{b11}. Let $G : \R \to \R$ be a continuously differentiable function such that $G(s)=0$ if $s \leq 0$ and $G'(s)$ is strictly positive and bounded for $s>0$, and hence $G(s)\geq 0$ for all $s\in \R$. Let
	\[
	H(s) := \int_0^s G(\sigma)d\sigma.
	\]
	
	Let $y(t)$ denote a solution to \eqref{ode} with operator \eqref{prdop}, and define the function
	\begin{equation}\label{ymax}
	V(t):= \int_0^1 H(y(t)\didier{[x]}-y_{\max})dx.
	\end{equation}
	For $t>0$, it holds
	\[
	\begin{split}
	\dot{V}(t) & = \int_0^1 G(y(t)\didier{[x]}-y_{\max})\partial_t y(t)\didier{[x]}dx \\
	& \hspace{-2em}= \int_0^1 G(y(t)\didier{[x]}-y_{\max})\left(\partial_{xx} y(t)\didier{[x]} + g(y(t)\didier{[x]})\right)dx \\
	& \hspace{-2em}= \left[G(y(t)\didier{[x]}-y_{\max})\partial_x y(t)\didier{[x]}\right]_{x=0}^1 - \int_0^1 G'(y(t)\didier{[x]}-y_{\max})(\partial_x y(t)\didier{[x]})^2 dx \\
	& \quad + \int_0^1 G(y(t)\didier{[x]}-y_{\max})g(y(t)\didier{[x]})dx.
	\end{split}
	\]
	Since $y(t)\didier{[0]}=y(t)\didier{[1]}$ and $\partial_x y(t)\didier{[0]}=\partial_x y(t)\didier{[1]}$, the first term on the right-hand size is zero. Since $G'$ is positive and bounded, it follows that
	\[
	\dot{V}(t) \leq \int_0^1 G(y(t)\didier{[x]}-y_{\max})g(y(t)\didier{[x]})dx.
	\]
	Now observe that if $y\didier{[x]} \leq y_{\max}$ then $G(y\didier{[x]}-y_{\max})=0$, and if $y\didier{[x]} \geq y_{\max}$ then $G(y\didier{[x]}-y_{\max})\geq 0$ and $g(y)\didier{[x]}\leq 0$. Hence $\dot{V}(t) \leq 0$.
	By construction, $V(t)\geq 0$ for all $t\geq 0$.
	If $y(0)\didier{[x]} \leq y_{\max}$ for all $x \in [0,1]$ then $V(0)=0$, which implies $V(t)=0$ and hence from \eqref{ymax} it holds $y(t)\didier{[x]}\leq y_{\max}$ for all $t\geq 0$ and $x \in [0,1]$.
	
	Invariance of the lower bound follows along the same lines by replacing \eqref{ymax} with
	\[
	V(t):=\int_0^1 H(y_{\min}-y(t)\didier{[x]})dx,
	\]
	ending therefore the proof.

	\subsection*{Proof of Lemma \ref{prdqm}}
	
	It follows readily from Definition \ref{defqm} that the sum of quasi-dissipative operators is a quasi-{dissipative} operator. From Lemma \ref{lapqm}, if $g$ is quasi-dissipative then $f$ in \eqref{prdop} is quasi-dissipative. So let us prove that $g$ is quasi-dissipative on $\solset$.
	
	Given $y_1, y_2 \in \hilbert$ define the maps
	\[
	y(\tau):=y_2+\tau(y_1-y_2), \quad G(\tau) := \langle y_1-y_2, g(y(\tau)) \rangle
	\]
	for $\tau \in [0,1]$.
	Since $G$ is continuous, there exists $\bar{\tau} \in [0,1]$ such that
	\begin{equation}\label{inter}
	G(1)-G(0) = \scal{y_1-y_2}{g(y_1)-g(y_2)} = \frac{dG}{d\tau}(\bar{\tau}).
	\end{equation}
	Notice that
	\begin{equation}\label{dgdt}
	\frac{dG}{d\tau} =  \scal{y_1-y_2}{D_y g(y)\frac{dy}{d\tau}} = \scal{y_1-y_2}{D_y g(y)(y_1-y_2)}
	\end{equation}
	where
	$D_y g(y)$ is the Fr\'echet derivative of $g$ with respect to $y$. {Note that, by definition, any Fr\'echet derivative is a linear and bounded operator. Therefore, defining $$a:= \max_{y \in \solset}\left\Vert D_y g(y) \right\Vert_{\mathcal{L}(\hilbert)}$$
		where $\mathcal{L}(\hilbert)$ stands for the space of bounded operators having their domains and range equal to $\hilbert$, and $\Vert . \Vert_{\mathcal{L}(\hilbert)}$ is the associated operator norm, one has
		\begin{equation}\label{specdg}
		\scal{z}{D_y g(y) z} {\le} a|z|^2
		\end{equation}
		for all $y\in \solset$} \didier{and $z \in \hilbert$.}
	Combining \eqref{dgdt} and \eqref{specdg} and letting $z=y_1-y_2$, it follows that
	\begin{equation}\label{specdg2}
	\frac{dG}{d\tau}(\tau) \leq a|y_1-y_2|^2
	\end{equation}
	for all $y_1,y_2 \in \solset$ and all $\tau \in [0,1]$.
	Then for each given pair $y_1,y_2 \in \solset$, there is a value of $\bar{\tau} \in [0,1]$ such that \eqref{inter} holds, and quasi-dissipativity inequality \eqref{monotone} then follows by plugging \eqref{inter} into \eqref{specdg2}.
	
	\subsection*{Statement and proof of Lemma \ref{lemma:wasserstein_distance_approx}}
	
	\begin{lemma}\label{lemma:wasserstein_distance_approx}
		Suppose that $f$ is quasi-dissipative and maximal. Consider an initial measure $\mu_0 \in \meas(\hilbert_1)$, and the associated strong measure-valued solution  $\mu_t$. Then, for any $t\in(0,1)$
		\begin{gather}\label{eq:first_order_approx}	
		W_2(\mu_{t+h},{F_h}_{\#}\mu_t) = o(h)
		\end{gather}
		where $F_h\colon z\mapsto z+hf(z)$, with $z\in\hilbert_1$.
	\end{lemma}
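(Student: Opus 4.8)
The plan is to exploit two facts: that the strong measure-valued solution is the push-forward of the initial measure through the semigroup, $\mu_t = \mathbb{S}(t)_{\#}\mu_0$, and that the map $F_h(z)=z+hf(z)$ is exactly the first-order (explicit Euler) expansion of $\mathbb{S}(h)$ at each point $z\in D(f)=\hilbert_1$. First I would use the semigroup property $\mathbb{S}(t+h)=\mathbb{S}(h)\circ\mathbb{S}(t)$ to write $\mu_{t+h}=\mathbb{S}(h)_{\#}\mu_t$, so that both $\mu_{t+h}$ and ${F_h}_{\#}\mu_t$ are push-forwards of the \emph{same} measure $\mu_t$. This makes it natural to build the explicit (generally non-optimal) transport plan $\gamma_h := \bigl(\mathbb{S}(h),F_h\bigr)_{\#}\mu_t$, obtained by pushing $\mu_t$ forward through $z\mapsto(\mathbb{S}(h)z,F_h z)$. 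Its marginals are $\mathbb{S}(h)_{\#}\mu_t=\mu_{t+h}$ and ${F_h}_{\#}\mu_t$, so $\gamma_h\in\Gamma(\mu_{t+h},{F_h}_{\#}\mu_t)$, and by the very definition of the Wasserstein distance
\[
W_2^2(\mu_{t+h},{F_h}_{\#}\mu_t) \leq \int_\hilbert |\mathbb{S}(h)z - z - hf(z)|^2\, d\mu_t(z).
\]
It then remains to prove that the right-hand side is $o(h^2)$.

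For this I would argue by dominated convergence on the integrand $e_h(z):=h^{-2}|\mathbb{S}(h)z - z - hf(z)|^2$. For each fixed $z\in\hilbert_1=D(f)$, the strong solution $s\mapsto\mathbb{S}(s)z$ is right-differentiable at $s=0$ with derivative $f(z)$; since $\mathbb{S}(h)z-z=\int_0^h f(\mathbb{S}(s)z)\,ds$ and $s\mapsto f(\mathbb{S}(s)z)$ is right-continuous at $s=0$, one gets $\tfrac1h(\mathbb{S}(h)z-z)\to f(z)$, hence $\mathbb{S}(h)z-z-hf(z)=o(h)$ and $e_h(z)\to0$ pointwise $\mu_t$-almost everywhere.

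To dominate $e_h$ uniformly in small $h$, I would invoke the regularity theory of the nonlinear semigroup generated by an $m$-accretive (quasi-dissipative and maximal) operator: for $z\in D(f)$ the curve $s\mapsto\mathbb{S}(s)z$ is Lipschitz with the slope estimate $|f(\mathbb{S}(s)z)|\leq e^{as}|f(z)|$. Combined with $\mathbb{S}(h)z-z=\int_0^h f(\mathbb{S}(s)z)\,ds$, this gives, for $h\in(0,1)$,
\[
|\mathbb{S}(h)z - z - hf(z)| \leq \int_0^h |f(\mathbb{S}(s)z)-f(z)|\,ds \leq \bigl(1+e^{a}\bigr)\,h\,|f(z)|,
\]
so that $e_h(z)\leq (1+e^{a})^2\,|f(z)|^2$. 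The dominating function $z\mapsto|f(z)|^2$ lies in $L^1(\mu_t)$: using $\mu_t=\mathbb{S}(t)_{\#}\mu_0$ and the same slope bound,
\[
\int_\hilbert |f(z)|^2\,d\mu_t(z) = \int_\hilbert |f(\mathbb{S}(t)w)|^2\,d\mu_0(w) \leq e^{2at}\int_\hilbert |f(w)|^2\,d\mu_0(w) < \infty,
\]
the last integral being finite because $\mu_0\in\meas(\hilbert_1)$ is carried by the bounded positively invariant support on which $f$ has finite second moment. Dominated convergence then yields $\int_\hilbert e_h\,d\mu_t\to0$, i.e. $W_2(\mu_{t+h},{F_h}_{\#}\mu_t)=o(h)$.

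The hard part will be the dominating function, not the coupling. Everything hinges on two inputs from the regularity theory of $m$-accretive evolution equations (Crandall--Liggett / B\'enilan): the pointwise right-differentiability of strong solutions at time $0$, and the quasi-contraction slope bound $|f(\mathbb{S}(s)z)|\leq e^{as}|f(z)|$ that tames the \emph{unboundedness} of $f$ on $\hilbert_1$; together with the $L^2(\mu_t)$-integrability of $f$, which is precisely where the hypothesis $\mu_0\in\meas(\hilbert_1)$ (with bounded invariant support) is used. The statement is the infinite-dimensional, quasi-dissipative analogue of \cite[Thm.~8.3.1]{ags08}, the extra obstacle being exactly this control of $f$ along the flow.
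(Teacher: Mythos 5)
Your argument is correct only for the particular curve $\mu_t = \mathbb{S}(t)_{\#}\mu_0$, and that restriction is precisely the problem. In the paper, a strong measure-valued solution (Definition \ref{def:strongmvsol}) is \emph{any} strongly narrowly continuous curve satisfying the weak Liouville equation \eqref{testliouville} with initial datum $\mu_0$; at this stage of the paper nothing is known about whether such a curve coincides with the push-forward of $\mu_0$ through the semigroup. The lemma is invoked in the proof of Theorem \ref{thm:uniq} for two \emph{arbitrary} strong measure-valued solutions $\check\mu_t$, $\hat\mu_t$, and Theorem \ref{thm:uniq} is in turn what yields uniqueness (Corollary \ref{nogap}), i.e.\ that every solution is the push-forward one. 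Your very first step, $\mu_{t+h} = \mathbb{S}(h)_{\#}\mu_t$, uses both the push-forward representation and the semigroup law, neither of which is available for an arbitrary solution of \eqref{testliouville}; assuming them amounts to assuming exactly the conclusion that the lemma is supposed to help establish, so the overall argument of the paper would become circular. This is why the paper's proof proceeds entirely differently: it never touches the semigroup, but shows --- using only the weak equation, the strong narrow continuity, and the optimal plans between $\mu_t$ and $\mu_{t+h}$ --- that $f$ lies in $L^2(\mu_t;\hilbert)$ with $\|f\|_{L^2(\mu_t)} \le |\mu'|(t)$, i.e.\ $f \in \mathrm{Tan}_{\mu_t}P_2(\hilbert)$, and then obtains \eqref{eq:first_order_approx} from \cite[Prop.~8.4.5]{ags08}.

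There is a second, more local gap: your domination argument needs $\int_{\hilbert}|f|^2 \, d\mu_0 < \infty$, which you justify by saying that $\mu_0 \in \meas(\hilbert_1)$ is ``carried by the bounded positively invariant support on which $f$ has finite second moment''. Membership in $\meas(\hilbert_1)$ only says that the measure is carried by $D(f)$; it gives neither boundedness of the support in the graph norm nor square-integrability of the unbounded operator $f$, so this integrability must be either hypothesized or proved. The paper does not need it as a hypothesis because the $L^2(\mu_t)$ bound on $f$ is \emph{derived} from the Liouville equation itself, via the metric-derivative estimate. That said, your coupling idea --- the plan $(\mathbb{S}(h),F_h)_{\#}\mu_t$ combined with the B\'enilan--Crandall slope estimate $|f(\mathbb{S}(s)z)|\le e^{as}|f(z)|$ and dominated convergence --- is a clean, quantitative and more elementary route to the estimate \emph{for the push-forward solution}, but as it stands it cannot replace the paper's tangent-space argument for arbitrary strong measure-valued solutions.
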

	
	\begin{proof}
		We need to prove that $f \in \mathrm{Tan}_{\mu_t} P_2(\hilbert)$, i.e., $f\in L^2(\mu_t;\hilbert)$  belongs to the tangent bundle at $\mu_t$ for 
		almost every $t \in [0,1]$.
		Using \cite[Prop. 8.4.5]{ags08}, this boils down to proving that 
		\begin{gather*}
		\Big(\int_{\hilbert} |f(x)|^2 d \mu_t(x)\Big)^{1/2} \le |\mu'|(t) 
		\text{ for almost every } t \in [0,1]
		\end{gather*}
		where $|\mu'|(t) = \lim_{s\to t}\frac{W_2(\mu_{s},\mu_t)}{|s-t|}$.
		Let $\mu_{t+h, t} \in \Gamma_o(\mu_t,\mu_{t+h})$ be the optimal transport plan between $\mu_t$ and $\mu_{t+h}$. Then
		\begin{gather*}
		\frac{|\mu_{t+h}(\phi)-\mu_t(\phi)|}{|h|} = \int_{\hilbert\times \hilbert} (\phi(x) - \phi(y)) d\mu_{t+h,t}(x)
		\end{gather*}
		$\forall \phi  \in \testcyl([0,1] \times \hilbert)$.
		We note that $\mu_{t+h, t} \to (x,x)_{\#} \mu_t$ narrowly as $h \to 0$  and by letting $t \in [0,1]$ be the point where $\mu_t$ is metrically differentiable w.r.t. to $t$ we obtain
		\begin{gather*}
		\limsup_{h\to 0}\frac{|\mu_{t+h}(\phi)-\mu_t(\phi)|}{|h|}  \le \lim_{h \to 0}\frac{W_2(\mu_{t+h},\mu_t)}{|h|} \Big(\int_{\hilbert}(\nabla  \phi(x))^2 d\mu_t(x)\Big)^{1/2} = |\mu'|(t)~||\nabla\phi||_{L^2(\mu_t,\hilbert)}
		\end{gather*}
		where we have used the H\"older inequality. Moreover 
		\begin{align*}
		\Big|\int_{0}^1 \int_{\hilbert}\partial_s \phi(t,x)d\mu_t(x) dt\Big|  &= \Big| \lim_{h\to 0} \int_{0}^1 \int_{\hilbert} \frac{\phi(t,x)- \phi(t-h,x)}{h} d\mu_t(x)dt\Big|\\  
		&\le \int \lim_{h\to 0}\frac{|\mu_{t+h}(\phi)-\mu_t(\phi)|}{|h|} dt \\
		& \le \int |\mu'|(t)~|\nabla\phi|_{L^2(\mu_t,\hilbert)} dt\\
		&\le \Big(\int_0^1 |\mu'|(t)^2 dt\Big)^{1/2}~\Big(\int_0^1 |\nabla\phi(t,x)|^2 d\mu_t(x) dt \Big)^{1/2}
		\end{align*}
		where for the first inequality we used Fatou's lemma and for the last we used Cauchy Schwarz.
		Next we use the fact that $\mu_t$ satisfies the Liouville equation \eqref{testliouville}, so \begin{align*}
		&\Big|\int_0^1 \int_{\hilbert}\nabla \phi(t,x)\cdot f(t,x) d\mu_t(x) dt\Big|\le \Big(\int_0^1 |\mu'|(t)^2 dt\Big)^{1/2}~\Big(\int_0^1 |\nabla\phi(t,x)|^2 d\mu_t(x) dt \Big)^{1/2} 
		\end{align*}
		Dividing both sides by $|\nabla\phi|_{L^2(\mu_t dt)}$  we obtain
		\[ \frac{\langle \nabla \phi, f \rangle_{L^2(\mu_t dt)}} 
		{|\nabla\phi|_{L^2(\mu_t dt)}} \le |\mu'|(t) \]
		which is the desired inequality as the left hand side is the $L^2(\mu_t dt)$ norm of $f$.
	\end{proof}
	
	\subsection*{Proof of Lemma \ref{prdmax}}
	
	{The maximality of \eqref{prdop} is equivalent to the existence of a positive value $\lambda$ such that
		$$
		\mathrm{Ran}(-f-\lambda \mathrm{I}_{\hilbert}) = \hilbert.
		$$
		Given that the inclusion $\hilbert \subset \mathrm{Ran}(-f-\lambda \mathrm{I}_{\hilbert})$ is straigtforward, it suffices to prove $\hilbert \subset \mathrm{Ran}(-f-\lambda \mathrm{I}_\hilbert)$. In other words, for $\bar \sol \in\hilbert$, one must show that there exists $\tilde \sol\in D(f)$ such that
		$$
		(\lambda \mathrm{I}_\hilbert-A)\tilde \sol = \bar \sol + g(\tilde \sol), 
		$$
		where $A=\partial_{xx}$ with $D(A)=D(f)$. Note that any $\lambda>0$ belongs to the resolvent of $A$ since the latter is maximal dissipative. To prove the maximality of $-f$, we will use a fixed-point strategy, based on the following mapping:
		\[
		\begin{split}
		\mathcal{T}:\: \hilbert &\rightarrow D(A)\\
		\sol &\mapsto (\lambda \mathrm{I}_\hilbert-A)^{-1}\left[\bar \sol + g(\sol)\right].
		\end{split}
		\]
		The operator $A$ being closed and having a compact resolvent, then, invoking \cite[Proposition 4.24]{cr21}, the injection from $D(A)$ to $\hilbert$ is compact. Therefore, the set
		$$
		\mathcal{B}:= \lbrace \sol \in D(A)\mid |\sol|_{D(A)}\leq N\rbrace,
		$$
		with $N$ a positive constant to be defined later, is compact and convex, as a ball of radius $N$ and centered at $0$. Denoting by $L$ the Lipschitz constant of $g$, one has
		\[
		\begin{split}
		|\mathcal{T}(\sol)|_{D(A)} \leq &\Vert (\lambda \mathrm{I}_\hilbert-A)^{-1}\Vert_{\mathcal{L}(\hilbert)} (|\bar y| + L|y|)\\
		\leq & \Vert (\lambda \mathrm{I}_\hilbert-A)^{-1}\Vert_{\mathcal{L}(\hilbert)} (|\bar y| + L C |y|_{D(A)})\\
		\leq & \Vert (\lambda \mathrm{I}_\hilbert-A)^{-1}\Vert_{\mathcal{L}(\hilbert)} (|\bar y| + L C N )
		\end{split}
		\]
		where $C$ is the constant describing the compact injection of $D(A)$ in $\hilbert$. According to \cite[Corollary 2.3.3]{tw09}, one has $\Vert (\lambda \mathrm{I}_\hilbert-A)^{-1}\Vert_{\mathcal{L}(\hilbert)}\leq \frac{M}{\lambda - \omega}$ with $M$ and $\omega$ given positive numbers and $\lambda>\omega$. Therefore, one can choose $\lambda$ sufficiently large such that 
		$$
		|\mathcal{T}(\sol)|_{D(A)} \leq \frac{N}{2}. 
		$$
		Using the Schauder fixed-point theorem \cite[Theorem B.17]{c07}, one can easily deduce that the operator \eqref{prdop} is maximal. 
	}

\section*{Acknowledgements}

This work benefited from feedback from Giovanni Fantuzzi, David Goluskin, Salma Kuhlmann, Maria Infusino, Emmanuel Tr\'elat and Victor Vinnikov.

\end{document}